\theoremstyle{plain}
\newtheorem{theorem}{Theorem}
\newtheorem{lemma}[theorem]{Lemma}
\newtheorem{corollary}[theorem]{Corollary}
\newtheorem{proposition}[theorem]{Proposition}
\theoremstyle{definition}
\newtheorem{definition}[theorem]{Definition}
\newtheorem{question}[theorem]{Question}
\newcommand{\setbuilder}[2]{\left\{#1\;\middle|\;#2\right\}}
\newcommand{\set}[1]{\left\{#1\right\}}
\newcommand{\epsi}{\varepsilon}
\newcommand{\fhi}{\varphi}
\newcommand{\ipr}[2]{\left\langle #1, #2 \right\rangle}
\newcommand{\numbersystem}[1]{\mathbb{#1}}
\newcommand{\bN}{\numbersystem{N}}
\newcommand{\bR}{\numbersystem{R}}
\newcommand{\bZ}{\numbersystem{Z}}
\DeclareMathOperator{\conv}{conv}
\DeclareMathOperator{\bd}{bd}
\newcommand{\Red}{\bR^d}
\newcommand{\csep}{c_{\textnormal{sep}}} 
\newcommand{\floor}[1]{\left\lfloor#1\right\rfloor}
\newcommand{\norm}[2][]{\left\lVert#2\right\rVert_{#1}}
\newcommand{\Hsep}{H_{\textnormal{sep}}}
\newcommand{\arc}[1]{\overarc{#1}}
\newcommand{\darc}[1]{\overarcarrow{#1}}
\newcommand{\overarcarrow}[1]{\ifmmode\mathchoice{%
\tikz [baseline = (N.base)] {
      \node [inner sep = 0pt] (N) {$\displaystyle#1$};
      \draw [line width = 0.4pt,-{Stealth[scale=0.6]}] 
         ([xshift=0.1ex,yshift=0.3ex]N.north west) to[out=15,in=165]
         ([yshift=0.3ex]N.north east);
   }
}{%
\tikz [baseline = (N.base)] {
      \node [inner sep = 0pt] (N) {$\textstyle#1$};
      \draw [line width = 0.4pt,-{Stealth[scale=0.6]}] 
         ([xshift=0.1ex,yshift=0.3ex]N.north west) to[out=15,in=165]
         ([yshift=0.3ex]N.north east);
   }
}{%
\tikz [baseline = (N.base)] {
      \node [inner sep = 0pt] (N) {$\scriptstyle#1$};
      \draw [line width = 0.3pt,-{Stealth[scale=0.6]}] 
         ([xshift=0.1ex,yshift=0.3ex]N.north west) to[out=15,in=165]
         ([yshift=0.3ex]N.north east);
   }
}{%
\tikz [baseline = (N.base)] {
      \node [inner sep = 0pt] (N) {$\scriptscriptstyle#1$};
      \draw [line width = 0.213pt,-{Stealth[scale=0.6]}] 
         ([xshift=0.1ex,yshift=0.3ex]N.north west) to[out=15,in=165]
         ([yshift=0.3ex]N.north east);
   }
}   
\else   
\tikz [baseline = (N.base)] {
      \node [inner sep = 0pt] (N) {#1};
      \draw [line width = 0.4pt,-{Stealth[scale=0.6]}] 
         ([xshift=0.1ex,yshift=0.3ex]N.north west) to[out=15,in=165]
         ([yshift=0.3ex]N.north east);
   }
\fi   
}
\newcommand{\overarc}[1]{\ifmmode\mathchoice{%
\tikz [baseline = (N.base)] {
      \node [inner sep = 0pt] (N) {$\displaystyle#1$};
      \draw [line width = 0.4pt,] 
         ([xshift=0.1ex,yshift=0.3ex]N.north west) to[out=15,in=165]
         ([yshift=0.3ex]N.north east);
   }
}{%
\tikz [baseline = (N.base)] {
      \node [inner sep = 0pt] (N) {$\textstyle#1$};
      \draw [line width = 0.4pt,] 
         ([xshift=0.1ex,yshift=0.3ex]N.north west) to[out=15,in=165]
         ([yshift=0.3ex]N.north east);
   }
}{%
\tikz [baseline = (N.base)] {
      \node [inner sep = 0pt] (N) {$\scriptstyle#1$};
      \draw [line width = 0.3pt,] 
         ([xshift=0.1ex,yshift=0.3ex]N.north west) to[out=15,in=165]
         ([yshift=0.3ex]N.north east);
   }
}{%
\tikz [baseline = (N.base)] {
      \node [inner sep = 0pt] (N) {$\scriptscriptstyle#1$};
      \draw [line width = 0.213pt,] 
         ([xshift=0.1ex,yshift=0.3ex]N.north west) to[out=15,in=165]
         ([yshift=0.3ex]N.north east);
   }
}   
\else   
\tikz [baseline = (N.base)] {
      \node [inner sep = 0pt] (N) {#1};
      \draw [line width = 0.4pt,] 
         ([xshift=0.1ex,yshift=0.3ex]N.north west) to[out=15,in=165]
         ([yshift=0.3ex]N.north east);
   }
\fi   
}
\newcommand{\noshow}[1]{}
\author{M\'arton Nasz\'odi\footnote{
Lor\'and E\"otv\"os University and
MTA-ELTE Lend\"ulet Combinatorial Geometry Research Group, Budapest}
\and Konrad J. Swanepoel\footnote{Department of Mathematics, London School of 
Economics}}
\title{%
Contacts in totally separable packings in the plane and in high dimensions%
\footnote{Part of the research was carried out while MN was a member of 
J\'anos Pach's chair of DCG at EPFL, Lausanne, which was supported by Swiss 
National Science Foundation Grants 200020-162884 and 200021-175977, and while 
MN and KS visited the Mathematical Research Institute Oberwolfach in their 
Research in Pairs programme.
MN also acknowledges the support of the National Research, Development and 
Innovation Fund (NRDI) grants K119670 and KKP-133864, the Bolyai Scholarship of the Hungarian Academy of Sciences and the New National Excellence and the TKP2020-NKA-06 Programs provided by the NRDI.}}\date{}
\begin{document}
\maketitle
\begin{abstract}
We study the contact structure of \emph{totally separable} packings of 
translates of a convex body $K$ in $\bR^d$, that is, packings where any 
two touching bodies have a separating hyperplane that does not intersect 
the interior of any translate in the packing.
The \emph{separable Hadwiger number} $\Hsep(K)$ of $K$ is defined to be the maximum number of translates touched by a single translate, with the maximum taken over all totally separable packings of translates of $K$.
We show that for each $d\geq 8$, there exists a smooth and strictly convex $K$ in $\bR^d$ with $\Hsep(K)>2d$, and asymptotically, $\Hsep(K)=\upOmega\bigl((3/\sqrt{8})^d\bigr)$.

We show that Alon's packing of Euclidean unit balls such that each translate touches at least $2^{\sqrt{d}}$ others whenever $d$ is a power of~$4$, can be adapted to give a totally separable packing of translates of the $\ell_1$-unit ball with the same touching property.

We also consider the maximum number of touching pairs in a 
totally separable packing of $n$ translates of any planar convex body~$K$.
We prove that the maximum equals $\floor{2n-2\sqrt{n}}$ if and only if $K$ is a quasi hexagon, thus completing the determination of this value for all planar convex bodies.
\end{abstract}

\section{Introduction}
We define a \emph{translative packing} of a convex body $K$ in $\bR^d$ to be a collection $\{x_1+K,x_2+K,\dots,x_n+K\}$ of translates of $K$ 
such that no two translates have interior points in common.
We say that two such translates \emph{touch} if they have a common boundary point.
A packing is \emph{totally separable} if any two touching bodies have 
a separating hyperplane that does not intersect the interior of any translate 
in the packing. The \emph{contact graph} of the packing is the graph defined on the set of translates in the packing where two translates are joined by an edge if they touch.

In this paper, we study the maximum and minimum degrees, and the number of 
edges of contact graphs of totally separable packings of a convex body.

The \emph{separable Hadwiger number}, denoted as $\Hsep(K)$, of a convex body 
$K$ in $\Red$ is the maximum degree in the contact graph of any totally 
separable packing of $K$.

This number is known for the $d$-cube $[0,1]^d$.
Any contact graph of a translative packing of the $d$-cube 
can be embedded in the contact graph of a $\bZ^d$-lattice packing of the cubes (Fejes 
T\'oth and Sauer \cite{FTS77}), which is clearly totally separable.
Therefore, the maximum possible degree equals the Hadwiger number of the 
$d$-cube, which is $3^d-1$.
It is also known that the maximum possible minimum degree equals 
$(3^d-1)/2$ (Talata \cite{T11}).

For arbitrary $K$ we have the well-known bounds $2d\leq\Hsep(K)\leq 3^d-1$.
If $K$ is smooth or strictly convex and $d\leq 4$, then $\Hsep(K)=2d$ \cite{BN18}.
In contrast, we show that for all $d\geq 8$ there exists a 
smooth and strictly convex body $K$
in $\bR^d$ with $\Hsep(K)>2d$.
\begin{theorem}\label{thm:ddimsmall}
For each $d\geq 8$ there exists a $d$-dimensional smooth and strictly convex body with a separable 
Hadwiger number of at least $2d+2$.
For $d=9, 10,11,12,14$, there is a lower bound of $29, 39, 50, 65, 91$, respectively.
\end{theorem}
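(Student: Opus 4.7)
The plan is to construct, for each relevant $d$, an explicit smooth strictly convex body $K\subset\bR^d$ together with a lattice $\Lambda\subset\bR^d$ such that the packing $\{K+\lambda:\lambda\in\Lambda\}$ is totally separable and a single translate touches at least the stated number of others. Since $K$ will be centrally symmetric about the origin, $K$ touches $K+v$ exactly when $v/2\in\bd K$, and the separating hyperplane at $v/2$ is forced to be the unique tangent hyperplane of $K$ there. Total separability then requires this hyperplane to avoid the interiors of all other translates $K+\mu$, so the problem reduces to finding $\Lambda$ and $K$ admitting many such ``separable'' vectors $v$.

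A clean way to realize this is to take $K$ inscribed in the Voronoi cell $V=V(\Lambda)$, tangent to the supporting hyperplane of each targeted facet of $V$ at its midpoint $v/2$, and smoothed to be strictly convex; such a $K$ exists by the standard approximation of a convex polytope by a smooth strictly convex body with prescribed tangent points and tangent planes. Provided each targeted facet hyperplane of $V$ avoids the interiors of all Voronoi cells other than the two it bounds, the inclusion $K+\mu\subset V+\mu$ automatically yields total separability of the packing of $K$. For the general bound $2d+2$ when $d\ge 8$, I would first establish the bound in a base dimension $d_0=8$ via an explicit lattice with at least $18$ ``safely separated'' facets, then pass to higher $d$ by the product construction $\Lambda_0\oplus\bZ^{d-8}$ applied to a correspondingly smoothed product body: each cartesian summand contributes its own separable contacts, giving $18+2(d-8)=2d+2$. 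The stronger bounds for $d=9,10,11,12,14$ would be produced similarly, using lattices adapted to each dimension (for instance laminated lattices, the Coxeter--Todd lattice in dimension $12$, or sections of the Barnes--Wall lattice in dimension $14$).

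The main obstacle is the \emph{facet-isolation} condition on the lattice. Requiring a Voronoi facet hyperplane to miss all non-adjacent Voronoi cell interiors is substantially stronger than the corresponding lattice vector being merely Voronoi-relevant, and already fails for the planar hexagonal lattice, where the perpendicular bisector of one minimum vector cuts through the Voronoi cell of another. Consequently, in each specific dimension one must balance lattice density (which supplies many short vectors, hence potential contacts) against enough orthogonality to prevent such crossings; this amounts to enumerating the short vectors of the candidate lattice, verifying the isolation inequality facet-by-facet, and then checking that the smooth strictly convex body inscribed in $V$ can be chosen so as to realize all of the targeted tangencies simultaneously. Carrying out this combinatorial verification in each of the dimensions $d=8,9,10,11,12,14$, and then showing that the resulting contact number lifts to the bound $2d+2$ in arbitrary $d\ge 8$ via the product construction, is the core of the proof.
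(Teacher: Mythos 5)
There is a genuine gap: your proposal is a plan whose central ingredient is never supplied. Everything hinges on exhibiting, in dimension $8$, a lattice with at least $18$ Voronoi facets satisfying your ``facet-isolation'' condition (and analogous configurations in dimensions $9$--$14$), and you explicitly defer this: you note the condition already fails for the hexagonal lattice and say the verification ``is the core of the proof.'' No candidate is checked, and the candidates you name (Coxeter--Todd, Barnes--Wall) are chosen for density, which works \emph{against} isolation --- dense lattices are precisely those whose facet bisector hyperplanes cut through many neighbouring cells. Moreover, you over-constrain the problem by insisting on a lattice packing in which every targeted separating hyperplane avoids \emph{all} other cells of the tiling: the definition of $\Hsep(K)$ only requires a single finite totally separable packing (one central translate plus its neighbours), which is a much weaker demand and is what makes the actual construction possible.

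The paper's route is quite different and supplies exactly the missing existence statement. It invokes a characterization from \cite{BN18} (Lemma~\ref{lem:BN18}) that converts the existence of a smooth, strictly convex $K$ with $\Hsep(K)\geq n$ into a purely linear-algebraic condition on vectors $x_i$ and functionals $\fhi_i$ with $\fhi_i(x_i)=1$ and $\fhi_i(x_j)\in[-1,0]$; this replaces your geometric smoothing-inside-a-Voronoi-cell step entirely. The required configuration is then produced from spherical codes: $18$ unit vectors in $\bR^7$ with pairwise inner products in $(-1/3,1/3)$ (Conway--Hardin--Sloane), lifted one dimension via Proposition~\ref{prop:exist}, with the remaining $2k$ contacts in $\bR^{d+k}$ coming from orthogonal coordinate directions --- the one part of your plan (the $\bZ^{d-8}$ product step) that does match the paper. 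Without a concrete source of the $18$-neighbour configuration in the base dimension, your argument does not establish the theorem.
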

These lower bounds are obtained from the existence of certain spherical codes, obtained by Conway--Hardin--Sloane \cite{CHS96}.

We find some upper bounds for dimensions $5$, $6$, and $7$.

\begin{proposition}\label{thm:uppersmall2}
The separable Hadwiger number of a $5$, $6$, $7$-dimensional smooth and strictly convex body is at most $15$, $27$, $63$, respectively.
\end{proposition}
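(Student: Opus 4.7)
The plan is to turn the total-separability hypothesis into a linear inequality between the contact normals and the contact vectors, and then to bound the size of the resulting configuration by a dimension-specific combinatorial/LP argument.

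Place the central translate of $K$ at the origin and suppose it is touched in a totally separable packing by $x_1+K,\dots,x_n+K$. Smoothness and strict convexity give a unique contact point $p_i\in\partial K\cap\partial(x_i+K)$ and a unique common tangent hyperplane $H_i$ at $p_i$, whose outer unit normal $u_i$ I orient to point from $K$ into $x_i+K$. For $j\ne i$, strict convexity forces the other contact point $p_j$ to lie strictly inside the halfspace of $H_i$ containing $K$; total separability then forces the entire translate $x_j+K$ to lie in that halfspace, which by computing $\max_{y\in x_j+K}\langle u_i,y\rangle$ is equivalent to
\[
\langle u_i,x_j\rangle \le 0 \qquad (i\ne j).
\]

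Writing $U=[u_1,\dots,u_n]$ and $X=[x_1,\dots,x_n]$, the $n\times n$ matrix $M=U^{T}X$ thus has positive diagonal, non-positive off-diagonal entries, and rank at most $d$. The pairs $(u_i,p_i)$ are furthermore required to be consistent with being values of the Gauss map of a single smooth strictly convex body, which imposes Rockafellar's cyclic-monotonicity condition and rules out otherwise-admissible sign patterns. Equivalently, after a normalisation, one obtains a pair of configurations of unit vectors $\{\bar u_i\},\{\bar x_i\}$ on $S^{d-1}$ whose bipartite Gram matrix has the prescribed sign pattern along with the monotonicity constraint inherited from $K$. In dimensions $d=5,6,7$, I would then bound $n$ by a Delsarte-style linear or semidefinite programming argument on the resulting spherical configuration, or by an explicit small-dimensional combinatorial analysis of arrangements of tangent hyperplanes to $K$, producing the bounds $15$, $27$, $63$.

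The main obstacle is precisely this last step. A naive Rankin-type bound applied to $\{u_i\}$ alone yields only $2d$, which is too weak in view of Theorem~\ref{thm:ddimsmall}; conversely, that theorem shows no uniform bound of this form can survive once $d\ge 8$, so the argument has to exploit rigidity present only in dimensions $5,6,7$. Extracting the exact numbers $15$, $27$, $63$ from the rank and sign constraints together with the cyclic-monotonicity condition coming from $K$ is where the real technical work lies; it is likely carried out either by a concrete LP-bound computation on the sign pattern of $M$, or by a careful geometric argument about how finitely many tangent hyperplanes to $K$ can coexist around one body without cutting each other's touching translates.
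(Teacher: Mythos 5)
Your setup is on the right track and essentially reproduces the first half of the paper's argument: the paper's Lemma~\ref{lem:BN18} gives exactly the matrix $[\fhi_i(x_j)]$ with $1$s on the diagonal, off-diagonal entries in $[-1,0]$, and rank at most $d$ (and, importantly, it is an if-and-only-if characterisation, so no extra ``cyclic monotonicity'' constraint is available or needed --- that part of your proposal is a red herring). But the step you flag as ``where the real technical work lies'' is precisely the step you have not supplied, and it is not done by Delsarte-style LP/SDP bounds or by a combinatorial analysis of tangent hyperplanes. The missing idea is much more elementary: add a suitable multiple of the all-ones matrix $J$ so as to recentre the off-diagonal entries. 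Concretely, after first deleting antipodal pairs (each entry equal to $-1$ forces $x_i=-x_j$, $\fhi_i=-\fhi_j$, so one may remove both and drop the dimension by one --- a reduction your version of the sign condition $\ipr{u_i}{x_j}\le 0$ does not account for), one sets $a_{ij}=((2+\epsi)\fhi_i(x_j)+1-\epsi)/3$. This keeps $a_{ii}=1$, moves all off-diagonal entries into $(-1/3,1/3)$, and raises the rank by at most one. Then the standard rank inequality $r\ge(\sum_i a_{ii})^2/\sum_{i,j}a_{ij}^2$ applied to this matrix gives
\[
d+1\ \ge\ r\ >\ \frac{m^2}{m+(m^2-m)/9}=\frac{9m}{m+8},
\]
hence $n<8(d+1)/(8-d)$, which evaluates to $16,28,64$ for $d=5,6,7$ and yields the claimed bounds $15,27,63$. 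Note that this also explains why the bound evaporates at $d=8$, consistent with Theorem~\ref{thm:ddimsmall}.

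Without the recentring-plus-rank-lemma step your proof does not close: a matrix with positive diagonal and merely non-positive off-diagonal entries of rank $\le d$ can have arbitrarily many rows (e.g.\ small negative off-diagonal entries), so the sign pattern and rank alone cannot give any finite bound, let alone the specific values $15,27,63$. The quantitative content comes entirely from the fact that the off-diagonal entries can be squeezed into an interval of the form $(-1/3,1/3)$ at the cost of one unit of rank, after which the trace-versus-Frobenius-norm inequality does all the work.
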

For large $d$ we have an exponential lower bound.
\begin{theorem}\label{thm:ddimbig}
There exists a $d$-dimensional smooth and strictly convex body with a separable 
Hadwiger number of at least $\upOmega\bigl((3/\sqrt{8})^d\bigr)$.
\end{theorem}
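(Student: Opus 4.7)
The plan is to bootstrap from the low-dimensional body of Theorem~\ref{thm:ddimsmall} by a product construction followed by smoothing. Take $K_0\subset\bR^8$ to be a smooth and strictly convex body with $\Hsep(K_0)\geq 18$ given by Theorem~\ref{thm:ddimsmall}, and let $c_1,\dots,c_{18}\in\bR^8$ be the contact vectors of a totally separable packing of $K_0$ in which the central body is touched by the $18$ translates $K_0+c_i$.

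For $d=8k$, I would first consider the Cartesian product $P=K_0^k$ in $\bR^{8k}$ together with the packing consisting of the translates $P+v$ with $v\in\{0,c_1,\dots,c_{18}\}^k$. A routine check shows this is totally separable: any two distinct translates differ in some coordinate in which the two base translates are non-overlapping, and the axis-aligned extension of the corresponding base separating hyperplane in that coordinate is disjoint from the interior of every other member of the packing, by total separability of the base packing. Within this packing, the central body $P$ is touched by $19^k-1$ others; of these, the $18^k$ \emph{corner contacts} --- those with $v\in\{c_1,\dots,c_{18}\}^k$ --- meet $P$ at the single point of $\partial P$ that is the product of the base contact points.

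The next step is to replace $P$ by a smooth and strictly convex body $K\subset\bR^{8k}$ obtained as a small perturbation of $P$, with the goal that $K$ should inherit from the product packing a large collection of single-point contacts at (slightly perturbed) corner-contact points. The face contacts of $P$ (those $v$ with some coordinate equal to $0$) will generically either collapse to a single contact point or vanish entirely under perturbation, so the corner contacts are the natural source of contact points for $K$.

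The main obstacle is preserving total separability after the perturbation. At a corner contact of $K$, the tangent hyperplane is no longer axis-aligned: its tilt is determined by the local geometry of the perturbation, and for most corners it will push the hyperplane into the interior of another translate in the packing. A sufficiently careful choice of perturbation (for instance via an $\ell_p$-sum with large $p$, or a direct rounding adapted to the contact directions) retains at least $\upOmega\bigl((3/\sqrt{8})^d\bigr)$ of the corner contacts together with total separability, which would prove the theorem. Dimensions $d$ not of the form $8k$ can be handled by taking a product with a low-dimensional factor and smoothing analogously.
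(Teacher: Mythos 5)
Your route is genuinely different from the paper's. The paper produces $\upOmega\bigl((3/\sqrt{8})^d\bigr)$ Euclidean unit vectors in $\bR^{d-1}$ with pairwise inner products in $(-1/3,1/3)$ by the probabilistic deletion method, and then feeds this spherical code into Proposition~\ref{prop:exist} (hence Lemma~\ref{lem:BN18}), which manufactures the smooth, strictly convex body \emph{together with} its separating functionals in one stroke; no smoothing of a non-smooth body is ever needed. Your product packing of $P=K_0^k$ is indeed totally separable, and $18^{d/8}\approx 1.435^d$ corner contacts would even beat the claimed bound. But the entire difficulty of the theorem is concentrated in the step you leave as an assertion: that ``a sufficiently careful choice of perturbation retains at least $\upOmega\bigl((3/\sqrt{8})^d\bigr)$ of the corner contacts together with total separability.'' No construction, no estimate, and no criterion for which corners survive is given, so this is a genuine gap rather than a routine verification.

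Moreover, the intended structure provably cannot survive smoothing in the form described. For a smooth body the only hyperplane separating two touching translates is the common tangent hyperplane at the contact point, so total separability of a packing $\{K\}\cup\{K+2x_v\}$ forces the tangent functionals $\fhi_v$ (normalised by $\fhi_v(x_v)=1$) to satisfy $-1\leq\fhi_v(x_w)\leq 0$ for all $w\neq v$ --- exactly the conditions of Lemma~\ref{lem:BN18}. Take the corner data $x_v=(c_{v(1)},\dots,c_{v(k)})$, $v\in\{1,\dots,18\}^k$, and write $\fhi_v=(g_1,\dots,g_k)$. Applying $\fhi_v(x_w)\leq 0$ to each $w$ differing from $v$ in a single coordinate $j$ gives $g_j(c_{v(j)})-g_j(c_i)\geq 1$ for all $i\neq v(j)$; summing over $j$ for a $w$ differing from $v$ in \emph{every} coordinate yields $\fhi_v(x_w)\leq 1-k$, which violates $\fhi_v(x_w)\geq -1$ once $k\geq 3$. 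This computation is stable under small perturbations of the $x_v$, so no perturbation, however careful, retains all corner contacts. To salvage the approach you would have to preselect an exponentially large subcode of $\{1,\dots,18\}^k$ avoiding such configurations \emph{and} prove that some smooth, strictly convex body realises exactly those contacts in a totally separable way; that selection-plus-realisation problem is essentially the original problem, and it is what the paper's spherical-code argument solves directly.
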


We show that the $\ell_1$ ball has a totally separable packing with high 
minimum degree in the contact graph.
\begin{theorem}\label{thm:ell1}
There exists a finite totally separable packing of $\ell_1^d$-unit balls such 
that each ball touches at least $2^{\sqrt{d}}$ others, when $d$ is a power of 
$4$. 
\end{theorem}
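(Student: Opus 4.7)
The strategy is to carry Alon's construction of a Euclidean packing with kissing number at least $2^{\sqrt{d}}$ over to the $\ell_1$ setting, and then to verify the additional condition of total separability. For $d=4^k$, Alon's centers may be taken to have coordinates in $\{-1,0,+1\}$ (up to a uniform rescaling), and for such integer-lattice points the $\ell_1$-distance is just the sum of coordinate differences. Thus a pair of Alon's centers at squared Euclidean distance $4$ automatically sits at $\ell_1$-distance $2$, which is precisely the kissing condition for translates of the cross-polytope $B_1^d=\set{x\in\bR^d:\|x\|_1\le 1}$.

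First I would extract from Alon's construction a set $C\subset\bZ^d$ of centers such that (i) the pairwise $\ell_1$-distances in $C$ are at least $2$ (the $B_1^d$-packing condition), and (ii) each center in $C$ has at least $2^{\sqrt{d}}$ other centers in $C$ at $\ell_1$-distance exactly $2$ (the kissing condition). Provided Alon's centers really can be realised in $\{-1,0,+1\}^d$ with the required Hamming structure, these two conditions should follow directly from his analysis together with the identity between $\ell_1$-distance and Hamming distance for such vectors.

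The core of the proof is verifying total separability. For a touching pair $c,c'\in C$ with $\|c'-c\|_1=2$, the support of $c'-c$ has size $1$ or $2$, so a separating hyperplane can be chosen with facet-normal $\epsi\in\set{-1,+1}^d$ of $B_1^d$ whose signs on the support of $c'-c$ are prescribed by $c'-c$, while the remaining coordinates of $\epsi$ are free. What must be shown is that these free coordinates can always be chosen so that the hyperplane $\set{x\in\bR^d:\sum_i \epsi_i x_i=\tfrac12\sum_i \epsi_i(c_i+c'_i)}$ avoids the interior of every other translate, i.e.\ so that $\lvert\sum_i \epsi_i c''_i-\tfrac12\sum_i \epsi_i(c_i+c'_i)\rvert\ge 1$ for every $c''\in C\setminus\set{c,c'}$. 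The algebraic structure underlying Alon's construction (Hadamard matrices and the associated combinatorial designs) imposes strong parity and orthogonality constraints on the possible values of the sums $\sum_i \epsi_i c''_i$, which I would use to prove the existence of admissible free coordinates of $\epsi$.

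The main obstacle is precisely this third step: total separability is extraneous to Alon's original argument and demands new ingredients. It is quite possible that one has to modify Alon's centres slightly---for instance, by restricting to a sublattice of $2\bZ^d+\text{offset}$, or by passing to a carefully chosen subset of the full construction that is still exponentially large in $\sqrt{d}$---in order to guarantee that the free coordinates of $\epsi$ can be set uniformly for every touching pair. Any such modification that loses only a constant factor in the kissing number is still within the asymptotic statement of the theorem.
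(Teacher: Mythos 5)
Your high-level plan --- port Alon's code to the $\ell_1$ setting and separate touching translates by facet hyperplanes of the cross-polytope --- is the same as the paper's, but the proposal stops exactly where the content of the proof begins, so there is a genuine gap. You reduce total separability to choosing, for each touching pair, the ``free'' signs of a facet normal $\epsi\in\{\pm1\}^d$ so that the resulting hyperplane misses every other translate, and you leave this step to unspecified ``parity and orthogonality constraints'' of the code, conceding that the construction may even need to be modified. In fact no constraints beyond the minimum-distance property and no modification are needed, and the right functional is completely explicit. Keep Alon's code $C\subseteq\{0,1\}^d$ as it is, with minimum Hamming distance $D$, and pack $\ell_1$-balls of radius $D/2$; note that for $0$--$1$ vectors the $\ell_1$-distance equals the Hamming distance, so two touching centres differ in exactly $D$ coordinates --- not in one or two coordinates, as your rescaling picture suggests (also, squared Euclidean distance $4$ does not force $\ell_1$-distance $2$ for $\{-1,0,1\}$-vectors: consider $(1,0,0,1)$ and $(0,1,1,0)$).

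The missing idea is this: for a centre $u$, take $f=2u-(1,1,\dots,1)\in\{\pm1\}^d$, so $\norm[\infty]{f}=1$. A direct count of the four coordinate types gives $f(u)-f(u')=m_{01}+m_{10}$, the Hamming distance between $u$ and $u'$, for \emph{every} $u'\in\{0,1\}^d$; hence $f(u)-f(u')\geq D$ for every other codeword $u'$. Consequently the single hyperplane $\{x: f(x)=f(u)-D/2\}$ supports the ball around $u$ and has \emph{every} other ball of the packing in the opposite closed half-space. This is stronger than total separability (one hyperplane per ball rather than per touching pair), it subsumes your case analysis on the support of $c'-c$, and it uses nothing about the Reed--Solomon or Hadamard structure of the code. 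Without this (or an equivalent) verification, your argument does not establish the theorem.
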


The number of edges in the contact graph of a packing, that is, the number of 
touching pairs is called the \emph{contact number} of the packing. For a 
positive integer $n$, we denote the largest contact number of a totally 
separable packing of $n$ translates of $K$ by $\csep(K,n)$.

The following class of $o$-symmetric convex bodies in the plane turns out to be 
special from the point of view of totally separable packings. We call $K$ 
a \emph{quasi hexagon}, if $K$ contains two line segments on its boundary with a common 
endpoint, such that both are of at least unit length in the norm with unit 
ball~$K$ (Figure~\ref{fig:quasi}).
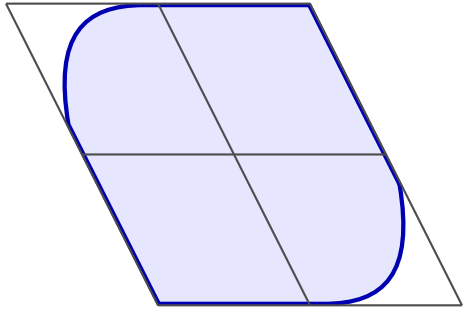
\begin{figure}
\centering
  \begin{tikzpicture}[thick,black!70!white,scale=2,cm={1,0,-0.5,1,(0,0)}]
  \draw[ultra thick, draw=blue!70!black, fill=blue!10!white, scale = 0.99]
	(1,-0.2) -- (1,1) -- (-0.1,1) .. controls (-0.5,1) and (-0.8,0.8) ..
	(-1,0.2) -- (-1,-1) -- (0.1,-1) .. controls (0.5,-1) and (0.8,-0.8) .. cycle;
  \draw (-1,-1) -- +(0,2);
  \draw (0,-1) -- +(0,2);
  \draw (1,-1) -- +(0,2);
  \draw (-1,-1) -- +(2,0);
  \draw (-1,0) -- +(2,0);
  \draw (-1,1) -- +(2,0);
  \end{tikzpicture}
\caption{A quasi hexagon}\label{fig:quasi}
\end{figure}
For example, parallelograms and affine regular hexagons are quasi hexagons.
See Lemma~\ref{lem:quasi} for a characterization.
We show the following upper bound for $\csep(K,n)$ if $K$ is not a quasi hexagon.
\begin{theorem}\label{thm:csep}
 Let $K$ be an $o$-symmetric convex body on the plane.
 If $K$ is not a quasi hexagon, then $\csep(K,n)\leq\floor{2n-2\sqrt{n}}$.
\end{theorem}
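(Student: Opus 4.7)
Let $\mathcal{P} = \{x_1 + K, \dots, x_n + K\}$ be a totally separable packing with contact graph $G$. The plan is to classify each contact edge by the direction of the vector joining the centers of the two touching translates, exploit the non-quasi-hexagon hypothesis to restrict these directions to only two, and then count via an AM--GM argument.

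The main structural step is to prove that for $K$ not a quasi hexagon, there exist two directions $d_1, d_2 \in S^1 / \{\pm 1\}$ such that every contact in $\mathcal{P}$ has its center-to-center vector parallel to $d_1$ or $d_2$. I would argue by contradiction: if contacts appeared in three or more directions, a careful geometric analysis of the corresponding separating hyperplanes---each of which must avoid the interior of every translate in the packing, not only of the two touching ones---combined with the $o$-symmetry of $K$, would produce two line segments on $\partial K$ sharing a common endpoint, each of length at least $1$ in the $K$-norm. By Lemma~\ref{lem:quasi}, this would force $K$ to be a quasi hexagon, contrary to hypothesis.

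Once the contacts are restricted to two parallel classes, the direction-$d_i$ edges partition into $a_i$ maximal chains of consecutive collinear touching translates, and a chain of length $k$ contributes $k-1$ edges, giving $n - a_i$ contacts of direction $d_i$. The two families of chains form a grid-like structure in which each translate occupies a distinct row-column intersection, so $n \le a_1 a_2$. Therefore, by AM--GM,
\[
|E(G)| = (n - a_1) + (n - a_2) = 2n - (a_1 + a_2) \le 2n - 2\sqrt{a_1 a_2} \le 2n - 2\sqrt{n},
\]
and since $|E(G)|$ is an integer, $|E(G)| \le \floor{2n - 2\sqrt{n}}$.

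The main obstacle is the structural step. Ruling out a third contact direction is delicate because $\partial K$ may contain multiple segments of $K$-norm length at least $1$ individually without violating the no-quasi-hexagon condition: one must exploit the global coupling among the separating lines throughout the packing, together with $o$-symmetry, to extract two such segments specifically with a \emph{common endpoint}. A likely approach is to consider a minimal triple of contacts in three pairwise non-parallel directions and show that the associated separating lines, translated by $o$-symmetry, produce a pair of boundary arcs of $K$ meeting in a concave corner.
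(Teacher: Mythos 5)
Your argument collapses at the structural step: it is simply not true that all contacts in a totally separable packing of a non-quasi-hexagon lie in only two directions, not even for the Euclidean disc and not even within a single connected component. For example, take unit discs centred at $(0,0)$, $(2,0)$, $(2+\sqrt{2},\sqrt{2})$ and $(2,-2)$. The pairs $\{1,2\}$, $\{2,3\}$, $\{2,4\}$ touch in three pairwise non-parallel directions, and the lines $x=1$, $x+y=2+\sqrt{2}$ and $y=-1$ witness total separability (each has distance at least $1$ from every non-adjacent centre). So no contradiction with the non-quasi-hexagon hypothesis can be extracted from three contact directions, and the grid/AM--GM count that follows has nothing to stand on. What Lemma~\ref{lem:triangle} actually gives for a non-quasi-hexagon is only that the contact graph is triangle-free, which via planarity yields $2n-4$, weaker than the claimed bound; the directions themselves are essentially unconstrained.

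The paper's proof is of a different nature: it is an induction on $n$ in the style of Harborth. One introduces a \emph{$\pi$-measure}, an angular measure on $\bd K$ that assigns measure zero to certain arcs adjacent to long boundary segments, and proves (Lemma~\ref{lem:piangle}) that any two ``adjacent'' angles of the packing at the outer boundary sum to at least $\pi$. Combining this with the angle-sum formula \eqref{eq:anglesumpolygon} for the outer polygon bounds the degree sequence along the boundary cycle; Euler's formula together with triangle-freeness (all bounded faces have at least $4$ edges) gives $e\leq 2n-2-v/2$ where $v$ is the length of the outer cycle, which settles the case $v\geq 4\sqrt{n}-4$; and when $v$ is small one deletes the outer cycle and applies the induction hypothesis. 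Your two observations that are salvageable --- that each ``row'' of collinear contacts contributes one fewer edge than its length, and an AM--GM at the end --- do appear implicitly in the extremal lattice configurations that show the bound is tight, but they cannot replace the inductive angular-measure argument for an arbitrary packing.
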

The special case when $K$ is the Euclidean disc 
was solved by Bezdek, Szalkai and Szalkai \cite{BSS16} using Harborth's method 
\cite{H74}. 
This result was extended to smooth and strictly convex bodies by Bezdek, Khan 
and Oliwa \cite{BKO} using a type of angular measure called B-measures.
However, there are many planar norms without B-measures \cite{NPS20}, and we follow an approach avoiding them entirely.
As a corollary, the exact values of $\csep(K,n)$ are now known for every planar convex body $K$.

\begin{corollary}\label{cor:csep}
 Let $K$ be an $o$-symmetric convex body on the plane.
 \begin{enumerate}
  \item If $K$ is a parallelogram, then $\csep(K,n)=\floor{4n-\sqrt{28n-12}}$.
  \item If $K$ is a quasi hexagon but not a parallelogram, then 
$\csep(K,n)=\floor{3n-\sqrt{12n-3}}$.
  \item If $K$ is not a quasi hexagon, then $\csep(K,n)=\floor{2n-2\sqrt{n}}$.
 \end{enumerate}
\end{corollary}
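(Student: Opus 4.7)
The plan is to assemble the three cases from Theorem~\ref{thm:csep}, classical bounds for parallelograms and hexagonal bodies, and explicit constructions. Since both total separability and the contact graph are invariant under affine transformations of the plane acting simultaneously on all translates, Case~1 (parallelograms) reduces to Harborth's theorem \cite{H74} for the unit square, which gives the tight value $\floor{4n-\sqrt{28n-12}}$; the matching construction is a near-square subset of the $\bZ^2$ grid, exploiting the corner-to-corner contacts that a square permits. Case~2 (quasi hexagons that are not parallelograms) reduces by an affine map to a body with two unit-length boundary segments meeting at a vertex at an acute angle, so translates can be arranged on an essentially triangular lattice with three contact directions; the matching upper bound $\floor{3n-\sqrt{12n-3}}$ is then the corresponding hexagonal-packing bound from the literature.

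For Case~3, Theorem~\ref{thm:csep} provides the upper bound. For the matching lower bound I would pick two linearly independent boundary points $u,v\in\bd K$ forming a \emph{conjugate pair}: a support line of $K$ at $u$ is parallel to the line through $o$ and $v$, and symmetrically. Such a pair exists for every $o$-symmetric planar convex body by a standard intermediate-value argument applied to the (set-valued) support direction along the boundary, with central symmetry ensuring the induced map on the projective line has an appropriate rotation number. Consider the grid
\[
\{K + 2iu + 2jv : 0 \le i < p,\ 0 \le j < q\}
\]
with $p$ and $q$ both close to $\sqrt{n}$. Total separability follows from conjugacy: at a horizontal contact point $(2i+1)u + 2jv$ the separating support line is parallel to $v$ and hence avoids every translate not in its own row, and symmetrically for vertical contacts. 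The contact count $2pq-p-q$ is then maximized by taking $p=\floor{\sqrt n}$ and placing the $n-p^2$ remaining translates in a partial extra row, yielding exactly $\floor{2n-2\sqrt n}$ after a routine Harborth-style bookkeeping.

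The main obstacle is the existence of a conjugate pair of boundary directions for an $o$-symmetric $K$ that need not be smooth or strictly convex; this requires treating support directions as an upper semicontinuous set-valued map on $\bd K$ and applying a Kakutani-type continuity argument. The remaining verifications — total separability of the grid, precise matching of the floor expression for all values of $n$, and the affine reductions used in Cases~1 and~2 — are standard.
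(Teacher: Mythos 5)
Your proposal follows essentially the same route as the paper: the upper bounds in cases 1 and 2 are the unrestricted planar contact-number bounds of Brass \cite{Br96} (not Harborth, who treated circles), case 3's upper bound is Theorem~\ref{thm:csep}, and all lower bounds come from totally separable lattice-type packings. The only point where you overcomplicate matters is the existence of your ``conjugate pair'' $u,v$: this is exactly the statement that some parallelogram circumscribed about $K$ has its edge-midpoints on $\bd K$, which holds for any minimum-area circumscribed parallelogram, so no Kakutani-type or set-valued continuity argument is needed.
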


The upper bounds in the first two claims above follow directly from Brass' results in \cite{Br96}, while the lower bounds in all three claims follow from suitable lattice packings (Figure~\ref{fig:totsep}).

As pointed out in \cite[Remark~14]{BKO}, the proof of 
the lower bound $\csep(K,n)\geq\floor{2n-2\sqrt{n}}$ does not make use of 
smoothness. In fact, it relies only on the existence of a parallelogram $P$ 
containing $K$ with the property that the midpoints of the edges of $P$ are in 
$K$.
Any parallelogram that contains $K$ and of minimum area has this property.
The maximum number of pairs of cells that touch in an edge in a polyomino of $n$ cells is $\floor{2n-2\sqrt{n}}$ \cite{BKO}.

We prove Theorems~\ref{thm:ddimsmall}, \ref{thm:ddimbig} and \ref{thm:ell1} and Proposition~\ref{thm:uppersmall2} in 
Section~\ref{sec:ddim}, and Theorem~\ref{thm:csep} in Sections~\ref{sec:preliminaries} and \ref{sec:csep}. We conclude with some open problems in Section~\ref{sec:final}.

\section{Many contacts in dimension \texorpdfstring{$d$}{d}}\label{sec:ddim}
To prove Theorems~\ref{thm:ddimsmall} and \ref{thm:ddimbig}, we use the following result from \cite{BN18} to find smooth and strictly convex bodies with large separable Hadwiger number.
\begin{lemma}[{\cite[Lemma~2.1 and Note~2.2]{BN18}}]\label{lem:BN18}
There exists an $o$-symmetric, smooth and strictly convex body in $\bR^d$ with $\Hsep(K)\geq n$ if, and only if, there exist vectors $x_1,x_2,\dots,x_n\in\bR^d$ and linear functionals $\fhi_1,\fhi_2,\dots,\fhi_n\in(\bR^d)^*$ such that for all distinct~$i,j$,
\begin{itemize}
\item $\fhi_i(x_i)=1$ and $-1\leq \fhi_i(x_j)\leq 0$, and
\item $\fhi_i(x_j)>-1$ whenever $x_j\neq -x_i$ or $\fhi_j\neq -\fhi_i$. \qed
\end{itemize} 
\end{lemma}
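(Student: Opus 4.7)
My plan is to prove the two directions separately.

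For the forward direction, suppose $K$ is a smooth strictly convex $o$-symmetric body with $\Hsep(K)\geq n$. After translation, we may assume that in a realizing totally separable packing, $K$ itself touches $x_1+K,\dots,x_n+K$. By $o$-symmetry, the contact point between $K$ and $x_i+K$ is $x_i/2\in\partial K$; smoothness supplies a unique outward supporting functional there, which I call $\fhi_i$ after normalizing so that $\fhi_i(x_i)=1$. The separating hyperplane of the pair $\{K,x_i+K\}$ must then be $\{\fhi_i=1/2\}$. Total separability forces each other translate $x_j+K$ to lie in one of the closed halfspaces bounded by this hyperplane. Computing the extrema of $\fhi_i$ on $x_j+K$ and using $x_j\in 2K$, strict convexity rules out $\fhi_i(x_j)\geq 1$, leaving $\fhi_i(x_j)\leq 0$. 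The inequality $\fhi_i(x_j)\geq -1$ is automatic from $x_j\in 2K$, and equality forces $x_j=-x_i$ by strict convexity of $K=-K$; smoothness at $-x_i/2$ then forces $\fhi_j=-\fhi_i$.

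For the reverse direction, given $x_i,\fhi_i$ satisfying the hypothesis, set $P:=\conv\{\pm x_i/2:i\in[n]\}$ and $Q:=\{y\in\bR^d:|\fhi_i(y)|\leq 1/2\text{ for every }i\}$; the bound $|\fhi_i(x_j)|\leq 1$ gives $P\subseteq Q$. Suppose for the moment that a smooth strictly convex $o$-symmetric body $K$ with $P\subseteq K\subseteq Q$ is available. Then each $x_i/2\in\partial K$ and $\fhi_i$ supports $K$ at value $1/2$, so $K$ touches $x_i+K$ along $\{\fhi_i=1/2\}$; the hypothesis $\fhi_i(x_j)\leq 0$ gives $x_j+K\subseteq\{\fhi_i\leq 1/2\}$, so this hyperplane meets no other translate's interior. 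Strict convexity at $\pm x_i/2$ together with the inequalities on $\fhi_i(x_j)$ further imply that distinct translates $x_i+K$ and $x_j+K$ are disjoint, so $\{K,x_1+K,\dots,x_n+K\}$ is a totally separable packing with $K$ touching $n$ others, giving $\Hsep(K)\geq n$.

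The main obstacle is producing such a $K$, since $P$ is a polytope (not strictly convex) and $Q$ is an intersection of slabs (not smooth). I would construct $K$ by rounding $P$: replace each face of $P$ with a smooth strictly convex cap, arranged so that at each vertex $\pm x_i/2$ the tangent hyperplane to $K$ is exactly $\{\fhi_i(y)=\pm 1/2\}$. The strict inequalities $\fhi_i(x_j)>-1$ for $(x_j,\fhi_j)\neq(-x_i,-\fhi_i)$ supply the necessary room: they guarantee that each contact point $\pm x_i/2$ lies strictly inside every defining slab of $Q$ except the one it is a vertex of, so sufficiently shallow caps remain inside $Q$ and the prescribed tangencies can be realized simultaneously. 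A standard approximation argument, for instance convolving the Minkowski functional of $P$ with a symmetric smooth bump and then adding a tiny multiple of a strictly convex smooth body to enforce strict convexity, assembles these local modifications into a single $o$-symmetric $C^\infty$ strictly convex body with the required boundary data.
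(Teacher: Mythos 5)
The paper does not actually prove this lemma: it is imported verbatim from \cite{BN18} (Lemma~2.1 and Note~2.2 there) and stated with an immediate end-of-proof mark, so there is no in-paper argument to compare yours against. Judged on its own, your forward direction is complete and correct: $o$-symmetry puts the contact point of $K$ and $x_i+K$ at $x_i/2$, smoothness pins down $\fhi_i$ and forces the unique separating hyperplane to be $\set{\fhi_i=1/2}$, and since the maximum of $\fhi_i$ on $K$ is $1/2$, the two possible sides for $x_j+K$ give $\fhi_i(x_j)\leq 0$ or $\fhi_i(x_j)\geq 1$, the latter excluded by strict convexity; the analysis of equality in $\fhi_i(x_j)\geq -1$ is also right. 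The reverse direction is structurally sound, and you correctly isolate where the hypothesis $\fhi_i(x_j)>-1$ (away from antipodal pairs) enters: it ensures each $\pm x_i/2$ lies on exactly one bounding hyperplane of $Q$, so a smooth body between $P$ and $Q$ is not forced to be singular there. Two points you gloss over do work out but deserve a line each: strict convexity gives $\norm[K]{x_i-x_j}>2$ (equality would force $x_j=o$), so no two of the outer translates even touch and total separability need only be checked for the pairs $(K,x_i+K)$; and if the $x_i$ do not span $\bR^d$ one should first replace $P$ by its convex hull with a small Euclidean ball.

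The one genuine gap is the final construction of $K$. The recipe you offer --- convolving the Minkowski functional of $P$ with a symmetric bump and then adding a tiny multiple of a strictly convex gauge --- cannot work as stated: any such global perturbation of the gauge moves \emph{every} boundary point of $P$, whereas your argument requires the $2n$ points $\pm x_i/2$ to remain \emph{exactly} on $\bd K$ with exactly the supporting hyperplanes $\set{y:\fhi_i(y)=\pm 1/2}$; otherwise $K$ and $x_i+K$ either overlap or fail to touch. In particular, adding a positive multiple of another gauge strictly shrinks the body and pulls every $x_i/2$ into the interior. What is needed is a construction that fixes these boundary points and their tangent data while smoothing and strictly convexifying everywhere else and staying inside $Q$; your ``local caps'' idea is the right starting point, but you must also check that the glued body is globally convex, which is precisely the step your sketch omits. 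This is standard but not automatic, and it is exactly the content of Note~2.2 of \cite{BN18} that the present paper chooses not to reproduce.
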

\begin{proposition}\label{prop:exist}
Let $\alpha<1$ and suppose that there exist Euclidean unit vectors 
$v_1,\dots,v_m\in\bR^{d-1}$ such that $\ipr{v_i}{v_j} 
\in 
(-1+2\alpha,\alpha]$ for any distinct $i,j$.
Then for each $k\geq 0$ there exists a smooth and strictly convex body in $\bR^{d+k}$ with a 
totally separable Hadwiger number of at least $m+2k$.
\end{proposition}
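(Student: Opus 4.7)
The plan is to apply Lemma~\ref{lem:BN18} directly, constructing $m + 2k$ vectors $x_i$ and dual functionals $\fhi_i$ in $\bR^{d+k}$ that split into two independent blocks: the first $m$ from a lift of the spherical code $\{v_i\}$ into one extra coordinate, and the remaining $2k$ as antipodal pairs along the last $k$ coordinate axes.

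Writing $\bR^{d+k} = \bR^{d-1} \times \bR \times \bR^k$, for $i \leq m$ set $x_i := (v_i, t, 0)$ with any fixed $t > 0$, and define $\fhi_i(y, z, w) := a\ipr{v_i}{y} + bz$. The normalization $\fhi_i(x_i) = 1$ forces $a + bt = 1$, and the choice $a = 1/(1-\alpha)$, $b = -\alpha/\bigl((1-\alpha)t\bigr)$ produces
\[
\fhi_i(x_j) = \frac{\ipr{v_i}{v_j} - \alpha}{1-\alpha},
\]
so the hypothesis $\ipr{v_i}{v_j} \in (-1+2\alpha, \alpha]$ translates exactly into $\fhi_i(x_j) \in (-1, 0]$ for $i\neq j$, with strictness preserved on the left end. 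For the remaining $2k$ indices take antipodal axis pairs $x_{m+2j-1} := (0, 0, e_j)$, $x_{m+2j} := -x_{m+2j-1}$, with $\fhi_{m+2j-1}, \fhi_{m+2j}$ the corresponding $\pm e_j$-coordinate projections.

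Verification is routine. Within each axis pair the evaluation equals $-1$, but Lemma~\ref{lem:BN18} permits this value because the pair is antipodal in both $x$ and $\fhi$. All cross terms between the two blocks vanish by the orthogonal decomposition, and within the lifted spherical-code block the extra coordinate $t > 0$ rules out $x_i = -x_j$ (and a quick check also rules out $\fhi_i = -\fhi_j$, using $\ipr{v_i}{v_j} > -1$ in the degenerate case $\alpha = 0$), so the strict inequality $\fhi_i(x_j) > -1$ is exactly what has to be verified there, which the open left endpoint of $(-1+2\alpha, \alpha]$ delivers.

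The only real subtlety is that the choice $a = 1/(1-\alpha)$ is forced: the upper bound $\fhi_i(x_j) \leq 0$ at $\ipr{v_i}{v_j} = \alpha$ demands $a \geq 1/(1-\alpha)$, while the lower bound $\fhi_i(x_j) > -1$ as $\ipr{v_i}{v_j} \to -1+2\alpha$ demands $a \leq 1/(1-\alpha)$. Once this coincidence is noticed the construction is essentially mechanical.
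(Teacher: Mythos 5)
Your proof is correct and is essentially the paper's own argument: the same lift of the spherical code into one extra coordinate with the functional $\fhi_i=\frac{1}{1-\alpha}(v_i,-\alpha,o)$ (your construction specializes to it at $t=1$), the same antipodal coordinate-axis pairs for the last $k$ dimensions, and the same appeal to Lemma~\ref{lem:BN18}. If anything, your write-up is slightly cleaner, since you list all $2k$ axis vectors explicitly and check the antipodal exception, where the paper's indexing of that block is garbled.
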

\begin{proof}
For each $i=1,\dots,m$,
let $x_i = (v_i,1,o)\in\bR^{d-1}\oplus\bR\oplus\bR^k=\bR^{d+k}$ and
\[\fhi_i = \frac{1}{1-\alpha}(v_i,-\alpha,o)\in\left(\bR^{d-1}\right)^*\oplus\bR^*\oplus(\bR^k)^*=\left(\bR^{d+k}\right)^*,\]
and for each $j=1,\dots,k$, let $x_{m+j}=(o,0,e_j)$ and $\fhi_{m+j}=(o,0,e_j^*)$, where $\{e_1,\dots,e_k\}$ and $e_1^*,\dots,e_k^*$ are the standard orthonormal bases of $\bR^k$ and $(\bR^k)^*$.
Then $\fhi_i(x_i) = 1$ and $\fhi_i(x_j) = 
\frac{\ipr{v_i}{v_j}-\alpha}{1-\alpha}\in(-1,0]$ for all distinct $i,j\leq m$,
and if $i>m$ or $j>m$ then
\[ \fhi_i(x_j)=\begin{cases}
        1  &\iff i=j,\\
        -1 &\iff i,j>m \text{ and } |i-j|=d,\\
        0 &\text{ otherwise}.
    \end{cases}\]
It then follows from Lemma~\ref{lem:BN18} that there exists a 
smooth and strictly convex body $K\subseteq\bR^{d}$ such that 
$\setbuilder{x+K}{x=o\text{ or }x=x_i\text{ for some }i}$ is a totally 
separable packing of $m+1$ translates of $K$ with all $x_i+K$ touching $K$.
\end{proof}

\begin{proof}[Proof of Theorem~\ref{thm:ddimsmall}]
There exist $18$ unit vectors in $\bR^7$ with pairwise inner products in the interval $(-1/3,1/3)$ (Conway, Hardin, Sloane \cite{CHS96}).
This, together with Proposition~\ref{prop:exist}, gives the existence of a smooth and strictly convex $K$ in $\bR^{d}$ with $\Hsep(K)\geq 2d+2$ for each $d\geq 8$.
Similarly, for the other claims we observe from \cite{CHS96} that there exist $29, 39, 50, 65, 91$ such unit vectors in $\bR^8, \bR^9, \bR^{10}, \bR^{11},\bR^{13}$, respectively.
\end{proof}

\begin{proof}[Proof of Theorem~\ref{thm:ddimbig}]
By Proposition~\ref{prop:exist}, it is sufficient to find Euclidean unit vectors with 
pairwise inner products in $(-1/3,1/3)$.
 It is well known that there exist $\upOmega\bigl((3/\sqrt{8})^d\bigr)$ such unit vectors.
 Here we give the short proof using the deletion method.
 Choose $k$ vectors $x_1,\dots,x_k$ independently from the unit ball of $\bR^d$.
 For any two $x_i$ and $x_j$ of them, the probability that $\norm[2]{x_i-x_j}\leq 2/\sqrt{3}$ is at most $(\sqrt{8}/3)^d =: p$.
 Similarly, the probability that $\norm[2]{x_i+x_j}\leq 2/\sqrt{3}$ is at most $(\sqrt{8}/3)^d =: p$.
Thus the expected number of pairs $\{x_i,x_j\}$ such that $\norm[2]{x_i-x_j}\leq 2/\sqrt{3}$ or $\norm[2]{x_i+x_j}\leq 2/\sqrt{3}$ is at most $2p \binom{k}{2}$ by the union bound.
It follows that there exist $x_1,\dots,x_k$ from the unit ball such that less than $pk^2$ pairs satisfy $\norm[2]{x_i\pm x_j}\leq 2/\sqrt{3}$.
 For each one of these pairs of vectors, remove one of them.
 There remains at least $k - pk^2$ vectors such that for any pair $x_i$ and $x_j$ we have $\norm[2]{x_i\pm x_j} > 2/\sqrt{3}$.
 If we normalise each $x_i$, we obtain at least $k-pk^2$ unit vectors such that all pairwise inner product are in $(-1/3,1/3)$.
 This is maximised by choosing $k = 1/(2p)$, which proves the theorem.
\end{proof}

To prove Proposition~\ref{thm:uppersmall2}, we need the following counterpart to Proposition~\ref{prop:exist}.
\begin{proposition}\label{prop:exist2}
If there exists a $d$-dimensional smooth and strictly convex $K$ with separable Hadwiger number of at least $n$, then for some $k\geq 0$ there exists an $(n-2k)\times (n-2k)$ matrix $[a_{ij}]$ of rank at most $d-k+1$ and satisfying $a_{ii}=1$ for all $i=1,\dots,n-2k$, and $-1/3 < a_{ij} < 1/3$ for all distinct $i,j$.
\end{proposition}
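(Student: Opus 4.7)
The plan is to reverse the construction of Proposition~\ref{prop:exist} by starting from a witnessing configuration supplied by Lemma~\ref{lem:BN18}. Applying that lemma to the given smooth and strictly convex body $K$ with $\Hsep(K)\geq n$ produces vectors $x_1,\dots,x_n\in\bR^d$ and functionals $\fhi_1,\dots,\fhi_n\in(\bR^d)^*$ satisfying $\fhi_i(x_i)=1$ and $\fhi_i(x_j)\in[-1,0]$ for $i\neq j$. Let $k$ be the number of antipodal pairs in this configuration (unordered pairs $\{i,j\}$ with $x_j=-x_i$ and $\fhi_j=-\fhi_i$). After relabeling, I take the unpaired indices to be $1,\dots,n-2k$ and the paired indices to form the pairs $(n-2k+2t-1,\,n-2k+2t)$ for $t=1,\dots,k$.

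The key structural observation is that the antipodal constraints force a block-diagonal form on $B:=[\fhi_i(x_j)]_{i,j=1}^n$. If $j,j'$ form an antipodal pair and $i\notin\{j,j'\}$, then $\fhi_i(x_{j'})=-\fhi_i(x_j)$, and since both values lie in $[-1,0]$ they must both equal $0$. Applying this whenever $i$ is unpaired and $j$ paired, symmetrically when $i$ is paired and $j$ unpaired, and when $i,j$ belong to distinct antipodal pairs, I obtain $B=B_u\oplus B_p$, where $B_u=[\fhi_i(x_j)]_{i,j\leq n-2k}$ is the $(n-2k)\times(n-2k)$ unpaired block and $B_p$ is a direct sum of $k$ copies of the $2\times 2$ matrix with diagonal entries $1$ and anti-diagonal entries $-1$. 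In particular $\rank{B_p}=k$. Writing $B=\Phi X$ with $\Phi\in\bR^{n\times d}$ (rows $\fhi_i$) and $X\in\bR^{d\times n}$ (columns $x_j$) gives $\rank{B}\leq d$, so by the block decomposition $\rank{B_u}\leq d-k$.

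For a small $\delta>0$ to be chosen, I define
\[ a_{ij} := \bigl(\tfrac{1}{3}-\delta\bigr) + \bigl(\tfrac{2}{3}+\delta\bigr)\fhi_i(x_j), \qquad 1\leq i,j\leq n-2k. \]
Then $a_{ii}=1$, and $A=[a_{ij}]$ equals $(\tfrac{1}{3}-\delta)J+(\tfrac{2}{3}+\delta)B_u$, where $J$ is the all-ones matrix of rank~$1$. Hence $\rank{A}\leq 1+\rank{B_u}\leq d-k+1$. For distinct $i,j$, the inequality $\fhi_i(x_j)\leq 0$ gives $a_{ij}\leq \tfrac{1}{3}-\delta<\tfrac{1}{3}$. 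Since only finitely many values $\fhi_i(x_j)$ are involved and each satisfies $\fhi_i(x_j)>-1$ (the pair being non-antipodal), I may choose $\delta$ small enough, depending on $\min_{i\neq j}\bigl(1+\fhi_i(x_j)\bigr)>0$, to also ensure $a_{ij}>-\tfrac{1}{3}$.

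The main step is the block-diagonality of $B$: it uses both the interval constraint $\fhi_i(x_j)\in[-1,0]$ and the antipodal symmetry to force $\fhi_i(x_j)=0$ across different antipodal pairs, and this is precisely what produces the factor $k$ in the rank bound. The parameter $\delta>0$ plays only a secondary role, upgrading the half-open interval $(-\tfrac{1}{3},\tfrac{1}{3}]$ obtained from the naive formula $a_{ij}=\tfrac{1}{3}+\tfrac{2}{3}\fhi_i(x_j)$ to the strict bound $|a_{ij}|<\tfrac{1}{3}$ required by the statement.
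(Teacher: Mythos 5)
Your proof is correct and follows essentially the same route as the paper: extract $x_i,\fhi_i$ from Lemma~\ref{lem:BN18}, observe that the entries equal to $-1$ come from antipodal pairs whose rows and columns vanish elsewhere, discard those $k$ pairs to get an $(n-2k)\times(n-2k)$ matrix of rank at most $d-k$ with off-diagonal entries in $(-1,0]$, and finish with a slightly perturbed affine rescaling into $(-1/3,1/3)$. Your explicit block-diagonal rank accounting is just a cleaner write-up of the paper's iterative ``remove a pair and move one dimension down'' step.
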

\begin{proof}
Consider the vectors $x_i\in\bR^d$ and functionals $\fhi_i\in(\bR^d)^*$, $i=1,\dots n$ given by Lemma~\ref{lem:BN18}.
Whenever there exist distinct $i,j$ such that $\fhi_i(x_j)=-1$, we must have $x_i=-x_j$ and $\fhi_i=-\fhi_j$.
It follows that $\fhi_i(x_k)=0$ for all $k\neq i,j$, and thus we can remove $x_i$, $x_j$, $\fhi_i$, $\fhi_j$ and move one dimension down.
We repeat this $k$ times until all $-1<\fhi_i(x_j)$ for all distinct $i,j$, and then $[\fhi_i(x_j)]_{i,j}$ is an $(n-2k)\times (n-2k)$ matrix $[a_{ij}]$ of rank at most $d-k$, with $1$s on the diagonal and all other entries in the interval $(-1,0]$.
Choose $\epsi\in(0,1)$ sufficiently small such that $-1+\epsi\leq \fhi_i(x_j)$ whenever $i\neq j$.
If we set $a_{ij}=((2+\epsi)\fhi_i(x_j)+1-\epsi)/3$, then $[a_{ij}]$ is the desired matrix.
\end{proof}

We recall a well-known result on the rank of a square matrix in terms of its trace and Frobenius norm.
\begin{lemma}[Rank Lemma]
The rank $r$ of a real non-zero square matrix $A=[a_{ij}]$ satisfies $r\geq (\sum_{i}a_{ii})^2/\sum_{i,j}a_{ij}^2$.
\end{lemma}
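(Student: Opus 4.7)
The plan is to read everything off the singular value decomposition of $A$ and then apply the Cauchy--Schwarz inequality once. Writing $A=\sum_{k=1}^{r}\sigma_k u_k v_k^{\top}$ with $r=\rank{A}$, positive singular values $\sigma_1,\dots,\sigma_r$, and orthonormal systems $u_1,\dots,u_r$ and $v_1,\dots,v_r$ in $\bR^n$, I would immediately extract the two identities that drive the argument:
\[
\sum_i a_{ii} = \operatorname{tr}(A) = \sum_{k=1}^{r}\sigma_k\ipr{u_k}{v_k},
\qquad
\sum_{i,j}a_{ij}^2 = \|A\|_F^2 = \sum_{k=1}^{r}\sigma_k^2,
\]
the first by taking traces of the rank-one summands and the second by orthonormality of the $u_k$'s and $v_k$'s.

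Next, I would apply Cauchy--Schwarz in $\bR^r$ to the vectors $(\sigma_1,\dots,\sigma_r)$ and $(\ipr{u_1}{v_1},\dots,\ipr{u_r}{v_r})$, obtaining
\[
\Bigl(\sum_i a_{ii}\Bigr)^{\!2} = \Bigl(\sum_{k=1}^r \sigma_k \ipr{u_k}{v_k}\Bigr)^{\!2} \le \Bigl(\sum_{k=1}^r \sigma_k^2\Bigr)\!\Bigl(\sum_{k=1}^r \ipr{u_k}{v_k}^2\Bigr).
\]
The first factor on the right is exactly $\sum_{i,j}a_{ij}^2$. Since $u_k$ and $v_k$ are unit vectors, $\ipr{u_k}{v_k}^2\le 1$ for every $k$, so the second factor is at most $r$. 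Rearranging, and using $A\neq 0$ to guarantee $r\ge 1$ and a nonzero denominator, yields the asserted bound.

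This is a classical linear-algebra fact and I do not expect a real obstacle. The one point to watch is that the SVD sum should be expanded only over the \emph{non-zero} singular values; that is what makes the bound $\sum_k\ipr{u_k}{v_k}^2\le r$ (rather than $\le n$) in the final step, which is precisely what couples the estimate to the rank.
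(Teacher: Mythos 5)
Your proof is correct and complete. The paper itself offers no proof of the Rank Lemma --- it is stated as a recalled, well-known fact --- so there is nothing to compare against; your SVD-plus-Cauchy--Schwarz argument is the standard one. It is worth noting that your route is exactly the right one for this paper's application: the matrix $[\fhi_i(x_j)]$ arising in Proposition~\ref{prop:exist2} need not be symmetric, so the more commonly quoted eigenvalue version (for symmetric $A$, $(\sum_i\lambda_i)^2\le r\sum_i\lambda_i^2$) would not suffice as stated, whereas your singular value decomposition handles arbitrary real square matrices, with the inner products $\ipr{u_k}{v_k}\le 1$ absorbing the possible non-normality. Both identities you extract ($\operatorname{tr}A=\sum_k\sigma_k\ipr{u_k}{v_k}$ and $\sum_{i,j}a_{ij}^2=\sum_k\sigma_k^2$) are verified correctly, and your closing remark --- that summing only over the $r$ nonzero singular values is what ties the bound to the rank rather than to the ambient dimension --- identifies the one genuinely load-bearing point of the argument.
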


\begin{proof}[Proof of Proposition~\ref{thm:uppersmall2}]
 Assume there is a smooth and strictly convex body with separable Hadwiger number $n$ in $\Red$ with $d=5,6$ or 7. Let $A$ be the $m\times m$ matrix, $m=n-2k$, of rank $r\leq d-k+1$ from Proposition~\ref{prop:exist2}.
Now, the Rank Lemma yields
 \[
  d-k+1\geq r > \frac{m^2}{m+(m^2-m)/9}=\frac{9m}{m+8},
 \]
hence $n < \frac{8(d-k+1)}{8-d+k}+2k$.
It can be checked that this last expression is maximised when $k=0$ since $d\geq 4$.
Thus, $n < \frac{8(d+1)}{8-d}$, and Proposition~\ref{thm:uppersmall2} follows.
\end{proof}

Next, we turn to the proof of Theorem~\ref{thm:ell1}.
Alon \cite{Alon97} uses a Reed--Solomon code to construct a packing of unit 
Euclidean balls in $\bR^d$ such that each ball touches at least $2^{\sqrt{d}}$ 
others when $d=4^k$ for some $k\in\bN$. 
This code $C\subseteq\set{0,1}^d$ has the property that any two points in $C$ have 
Hamming distance at least some fixed $D$, and for any point in $C$, there are at least $2^{\sqrt{d}}$ points of $C$ at Hamming distance $D$.
It is easily seen that using the same code, this 
construction works for any \emph{unconditional convex body} (symmetric with respect to all the coordinate hyperplanes), for instance the 
$\ell_p^d$-unit balls.
In the next proposition, we show that for $\ell_1^d$-unit balls this construction gives a totally 
separable packing. This implies Theorem~\ref{thm:ell1}.

\begin{proposition}\label{prop:ellonepacking}
Let $C\subseteq\set{0,1}^d$ be such that any two vectors from $C$ have 
Hamming distance at least $D$.
Then the packing of $\ell_1^d$ balls of radius $D/2$ with centres from $C$ is 
totally separable.
\end{proposition}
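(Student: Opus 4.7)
The plan is to produce, for each codeword $c \in C$, a single hyperplane $H_c$ that separates the $\ell_1^d$-ball around $c$ from \emph{every} other ball in the packing. For a touching pair $(c,c')$ I then take $H_c$ as the required separator: by construction it separates the two touching bodies and also avoids the interior of every other translate in the packing.

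To construct $H_c$, I define the sign vector $\epsi^c \in \set{-1,+1}^d$ coordinate-wise by $\epsi^c_i := 1 - 2 c_i$ and set
\[
H_c := \setbuilder{x \in \bR^d}{\epsi^c \cdot (x - c) = D/2}.
\]
Geometrically $H_c$ supports the facet of the $\ell_1^d$-ball around $c$ with outward $\ell_\infty$-unit normal $\epsi^c$. The crux of the verification is the identity
\[
\epsi^c \cdot (c' - c) = \sum_{i=1}^d (1 - 2c_i)(c'_i - c_i) = \sum_{i=1}^d |c'_i - c_i| = \norm[1]{c' - c},
\]
valid for any $c, c' \in \set{0,1}^d$, which I check by a coordinate case split on $c_i \in \set{0,1}$.

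Given the identity, the rest is a one-line $\ell_1$--$\ell_\infty$ duality estimate. Since $\norm[\infty]{\epsi^c} = 1$, the bound $\epsi^c \cdot (x - c) \leq \norm[1]{x - c} \leq D/2$ places the ball around $c$ in the halfspace $\setbuilder{x \in \bR^d}{\epsi^c \cdot (x - c) \leq D/2}$, meeting $H_c$ only along a facet. For any other $c' \in C$, combining the identity with the code condition $\norm[1]{c' - c} \geq D$ yields
\[
\epsi^c \cdot (x - c) = \epsi^c \cdot (c' - c) + \epsi^c \cdot (x - c') \geq D - D/2 = D/2
\]
for every $x$ in the ball around $c'$. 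Hence this ball lies entirely on the opposite side of $H_c$, and when $(c,c')$ is a touching pair the equality $\norm[1]{c' - c} = D$ turns $H_c$ into a genuine separating hyperplane, tangent to both balls along opposite facets.

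The only conceptual step, and what I expect to be the sole real obstacle, is the realisation that $\epsi^c$ can be chosen to depend only on $c$ and not on a particular touching partner; once this is spotted, the problem reduces from a per-pair search for a separator to a single estimate per ball, and no further difficulty remains.
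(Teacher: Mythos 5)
Your proof is correct and follows essentially the same route as the paper: both construct, for each codeword $c$, a single $\ell_\infty$-unit functional (your $\epsi^c = (1-2c_i)_i$ is exactly the negative of the paper's $f = 2u - (1,\dots,1)$) whose level set at distance $D/2$ separates the ball around $c$ from all other balls simultaneously. Your coordinate identity $\epsi^c\cdot(c'-c)=\norm[1]{c'-c}$ is a slightly slicker way of packaging the paper's $m_{ij}$ count, but the argument is the same.
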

\begin{proof}
We show the stronger property that for any $u\in C$, the $\ell_1^d$-ball with 
centre $u$ and radius $D/2$ has a supporting hyperplane $H$ such that all 
$\ell_1^d$-balls with centres from $C\setminus\set{u}$ and radius $D/2$ lie in 
the closed half-space bounded by $H$ on the opposite side of $u$.

Define the linear functional $f = 2u - 
(1,1,\dots,1)\in(\ell_1^d)^*=\ell_\infty^d$.
Then $\|f\|_\infty=1$ and $f(u)$ equals the number of $1$s in the coordinates of $u$.

Next consider any $u'\in C\setminus\set{u}$.
For each $i,j\in\set{0,1}$, let $m_{ij}$ denote the number of coordinates 
$t\in\set{1,\dots,d}$ where $u_t=i$ and $u'_t=j$.
Then 
the Hamming distance between $u$ and $u'$ is $m_{01}+m_{10}\geq D$, 
$f(u)=m_{10}+m_{11}$, and $f(u')= m_{11}-m_{01}$.
It follows that $f(u)-f(u') = m_{01}+m_{10}\geq D$, and since $\norm[\infty]{f}=1$, 
the hyperplane $H=\setbuilder{x\in\bR^d}{f(x)=f(u)-D/2}$ separates the $\ell_1^d$ balls around $u$ and $u'$ of 
radius~$D/2$.
Indeed, for any $x\in\ell_1^d$ satisfying $\norm[1]{x-u}\leq D/2$
we have $f(x-u)\geq -D/2$, hence $f(x)\geq f(u)-D/2$,
and for any $x\in\ell_1^d$ satisfying $\norm[1]{x-u'}\leq D/2$
we have $D/2\geq f(x-u')=f(x)-f(u)+f(u)-f(u')\geq f(x)-f(u)+D$, hence $f(x)\leq f(u)-D/2$.
\end{proof}

\section{Preliminaries on planar packings}\label{sec:preliminaries}
It will be helpful to note that quasi hexagons are those $o$-symmetric planar convex bodies that have an affine position between a certain affine regular hexagon and a square, in the following sense.
We leave the proof to the reader.
\begin{lemma}\label{lem:quasi}
An $o$-symmetric convex body $K$ in $\bR^2$ is a quasi hexagon iff there is a linear $T\colon\bR^2\to\bR^2$ such that $H\subseteq T(K)\subseteq S$, where $H=\conv\set{(\pm1,0),(0,\pm 1),\pm(1,1)}$ and $S=[-1,1]^2$. \qed
\end{lemma}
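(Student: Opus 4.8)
The statement is affinely invariant: for any linear isomorphism $T$ the body $K$ is a quasi hexagon exactly when $T(K)$ is, since $T$ preserves boundary segments and $\norm[T(K)]{Tv}=\norm[K]{v}$. So I would read the displayed equivalence as: $K$ is a quasi hexagon iff some linear image of $K$ lies between $H$ and $S$. First I would record the elementary descriptions $\norm[H]{(x,y)}=\max\set{|x|,|y|,|x-y|}$ and $\norm[S]{(x,y)}=\max\set{|x|,|y|}$, and the observation that $H$ is precisely the convex hull of the four edge-midpoints $(\pm1,0),(0,\pm1)$ of $S$ together with the opposite pair of vertices $(1,1),(-1,-1)$. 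Thus, writing $L=T(K)$ and $P:=T^{-1}(S)$, the sandwich $H\subseteq L\subseteq S$ is equivalent to saying that $K$ admits an $o$-symmetric circumscribed parallelogram $P$ whose four edge-midpoints and one antipodal pair of vertices all lie in $K$.

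For the easy direction, suppose $H\subseteq L\subseteq S$. The two edges of $S$ meeting at $(1,1)$, namely $[(1,0),(1,1)]$ and $[(0,1),(1,1)]$, lie in $H\subseteq L$; since they also lie on $\partial S$ and $L\subseteq S$, they lie on $\partial L$. Hence $L$ has two boundary segments meeting at $(1,1)$, and as $(1,0),(0,1)\in H\subseteq L\subseteq S$ force $\norm[L]{(0,1)}=\norm[L]{(1,0)}=1$, each has $L$-length $1$. So $L$, and therefore $K$, is a quasi hexagon.

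For the converse, let the two boundary segments of $K$ meet at $p$, with $K$-unit direction vectors $a,b$ (so $p+a,p+b\in\partial K$) and $K$-lengths $L_1,L_2\ge1$. The lines $\ell_1,\ell_2$ carrying the segments are supporting lines, so with their reflections they bound an $o$-symmetric circumscribed parallelogram $P_0$ having $p$ as a vertex. Because $K$ lies in the wedge at $p$ bounded by $\ell_1,\ell_2$, the interior point $o$ satisfies $o-p\in\operatorname{int}\operatorname{cone}(a,b)$, so $sa+rb=-p$ has a unique solution with $s,r>0$, and the edge-midpoints of $P_0$ are $\pm(p+sa)$ and $\pm(p+rb)$. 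Mapping $P_0\to S$ by the linear $T$ with $p\mapsto(1,1)$, $p+sa\mapsto(1,0)$, $p+rb\mapsto(0,1)$ (consistent, since $sa+rb=-p$ gives $T(p)=(1,1)$ automatically), the supporting lines $\ell_1,\ell_2$ become $x=1$ and $y=1$, so $T(K)\subseteq S$ by central symmetry, while $p,-p\mapsto(1,1),(-1,-1)$ and the edge-midpoints map to $(\pm1,0),(0,\pm1)$. Hence $H\subseteq T(K)\subseteq S$ \emph{provided} the edge-midpoints $p+sa,p+rb$ actually lie in $K$, i.e.\ provided $s\le L_1$ and $r\le L_2$ — equivalently, $P_0$ has the midpoint property.

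The real obstacle is that $s\le L_1,\ r\le L_2$ can fail at a given quasi-hexagon vertex (a sufficiently ``flat'' corner is bad); one checks this condition is equivalent to the far endpoints of the two segments lying beyond the midlines of the opposite pair of sides of $P_0$. The task is thus to produce \emph{some} vertex of $K$ at which it holds. I would extract such a vertex from a minimum-area $o$-symmetric circumscribed parallelogram $P^\ast$: such $P^\ast$ automatically has the midpoint property, and I would use the segments of $K$-length $\ge1$ together with the $o$-symmetry to show $P^\ast$ may be chosen with a vertex on $\partial K$, whereupon $P^\ast=P_0$ at that vertex and the construction above applies. The degenerate case of two collinear segments (a single boundary segment of $K$-length $\ge2$) is handled directly by placing it on a side of $S$. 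Showing that a good vertex always exists — reconciling the global midpoint property of $P^\ast$ with the requirement that one of its vertices meet $\partial K$ — is where convexity and central symmetry are used in full, and I expect it to be the hardest step.
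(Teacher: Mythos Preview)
The paper gives no proof of this lemma (it is left to the reader with a \qed), so there is nothing to compare against; I can only assess whether your argument stands on its own.

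Your easy direction is clean and correct. In the hard direction your reduction is also correct: with $P_0$ the parallelogram bounded by $\pm\ell_1,\pm\ell_2$ and $T$ the linear map taking $P_0$ to $S$ with $T(p)=(1,1)$, one has $T(K)\subseteq S$ automatically, and $H\subseteq T(K)$ holds precisely when the edge-midpoints $p+sa=-rb$ and $p+rb=-sa$ lie in $K$, i.e.\ when $s\le L_1$ and $r\le L_2$. You are also right that this can fail. A concrete instance: for the hexagon
\[
K=\conv\{(1,0),(1,1),(-\tfrac12,1),(-1,0),(-1,-1),(\tfrac12,-1)\}
\]
the vertex $p=(1,0)$ is a quasi-hexagon vertex (both incident edges have $K$-length exactly $1$), yet one computes $a=(0,1)$, $b=(-\tfrac12,-1)$ and $s=r=2$, so the midpoints $(1,2),(0,2)$ of $P_0$ lie well outside $K$. (In fact one always has $s,r\ge 1$: applying the supporting functional $\phi_1$ of $\ell_1$ to $-p=sa+rb$ gives $\phi_1(b)=-1/r$, and $b\in K$ forces $r\ge 1$.)

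The gap is exactly where you place it: you assert that a minimum-area circumscribed parallelogram $P^\ast$ can be chosen with a vertex on $\partial K$, but you do not prove it, and it is not automatic. In the hexagon above there is a one-parameter family of minimum-area circumscribed parallelograms (all of area $4$), namely those with sides $y=\pm1$ and $x-ny=\pm1$ for $n\in[0,\tfrac12]$; for $n$ strictly between $0$ and $\tfrac12$ \emph{none} of the four vertices lies in $K$. Only the extreme choices $n=0$ and $n=\tfrac12$ give a vertex on $\partial K$. So ``$P^\ast$ has the midpoint property'' and ``$P^\ast$ has a vertex on $\partial K$'' are genuinely independent conditions on the family of circumscribed parallelograms, and producing one parallelogram satisfying both still needs an argument. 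In the example the adjacent vertex $p'=(1,1)$ does satisfy $s=r=1$, which suggests that moving from a bad quasi-hexagon vertex to a neighbouring vertex may always repair the construction; proving this (or your $P^\ast$ claim) is the missing step. As it stands, the hard direction is a plausible outline rather than a proof.
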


\begin{lemma}\label{lem:triangle}
Let $K$ be an $o$-symmetric convex body.
Let $u_1,u_2\in\bd K$ be such that $\{K,K+2u_1,K+2u_2\}$ is a totally separable packing in which any two of the translates touch.
Then $K$ is a quasi hexagon.
\end{lemma}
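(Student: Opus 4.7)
The plan is to produce two line segments on $\bd K$, each of unit length in the $K$-norm, that share a common endpoint; this is the defining property of a quasi hexagon. First I would establish that $u_1 - u_2 \in \bd K$, so that $\|u_1\|_K = \|u_2\|_K = \|u_1 - u_2\|_K = 1$: indeed, since $K + 2u_1$ and $K + 2u_2$ are interior-disjoint and touching, $o$-symmetry of $K$ gives $2(u_1 - u_2) \in K - K = 2K$ while $u_1 - u_2 \notin \operatorname{int} K$, so $u_1 - u_2 \in \bd K$. A short argument then shows that $u_1, u_2$ must be linearly independent.

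Next, for each of the three touching pairs I would fix a separating line $\ell_i$ ($i=1,2,3$) promised by total separability, and let $f_i$ be the corresponding linear functional, normalised so that $\max_K f_i = 1$ and $f_1(u_1) = f_2(u_2) = f_3(u_1 - u_2) = 1$. Total separability requires each $\ell_i$ to miss the interior of the third translate in the packing, yielding dichotomies such as $f_1(u_2) \leq 0$ or $f_1(u_2) = 1$. Combining these with the inequality $|f_i| \leq 1$ on $K$ applied to $u_1, u_2, u_1 - u_2$---crucially, $f_1(u_1 - u_2) \leq 1$ forces $f_1(u_2) \geq 0$---I would reduce the possibilities to
\[ f_1(u_2),\, f_2(u_1) \in \{0,1\}, \qquad (f_3(u_1), f_3(u_2)) \in \{(1,0),(0,-1)\}. \]

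With these finite constraints in hand, I would use the elementary fact that whenever a linear functional $f$ attains its maximum on $K$ at two distinct boundary points $p, q$, the segment $[p, q]$ lies in $\bd K$. Each branch in the display above then forces one of the three unit-length segments $[u_1, u_2]$, $[u_1, u_1 - u_2]$, $[-u_2, u_1 - u_2]$ (or its $o$-symmetric image) onto $\bd K$. A short check of the $2 \times 2 \times 2 = 8$ combinations verifies that in every case two such segments share an endpoint; at most two of the cases require invoking $o$-symmetry to exhibit the shared endpoint.

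The main obstacle is the reduction in the second paragraph: without using the inclusion $u_1 - u_2 \in K$ to rule out $f_1(u_2) \in (-1, 0)$, one is left with an \emph{a priori} continuous family of possibilities for $f_1(u_2)$ that need not pin down any segment on $\bd K$, and the case analysis collapses. Once the finite dichotomies are established, the remainder is a short and mechanical enumeration.
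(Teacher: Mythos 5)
Your argument is correct; I checked the normalisation $f_1(u_1)=f_2(u_2)=f_3(u_1-u_2)=1$, the separation dichotomies, the key reduction via $u_1-u_2\in K$, and the eight-case enumeration, and they all go through (the three candidate segments have pairwise non-parallel directions $u_1-u_2$, $u_2$, $u_1$, so the two segments you exhibit in each case are genuinely distinct and non-collinear, as the quasi-hexagon definition implicitly requires). The paper reaches the same three boundary segments by a shorter geometric route: it relabels the translates so that each of the two separating lines incident to the distinguished translate keeps the third translate on that translate's own side, deduces that these two lines must then both pass through the single touching point of the other two translates, and reads the flat arcs of $\bd K$ directly off the resulting picture. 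Your functional-analytic version replaces that reduction with an exhaustive case analysis: it is longer and more mechanical, but it treats all configurations uniformly and does not depend on the relabelling step, which the paper leaves implicit and which in principle requires ruling out a cyclic assignment of the third body to the sides of the three lines. The decisive ingredient in your write-up --- that membership of $u_1-u_2$ in $K$ collapses each $f_i(u_j)$ from an interval of possibilities to a two-point set --- plays exactly the role of the paper's observation that the two relevant separating lines must concur at the touching point of the remaining pair.
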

\begin{proof}
By possibly relabelling the three translates, we can assume that there exist two intersecting lines $\ell_1$ and $\ell_2$ such that $K$ and $K+2u_i$ are on opposite sides of $\ell_i$, and $K$ and $K+2u_{1-i}$ on the same side of $\ell_i$ ($i=1,2$).
Since $K+2u_1$ and $K+2u_2$ are separated by both $\ell_1$ and $\ell_2$, it follows that the intersection point of the two lines equals $u_1+u_2$, and that $\bd K$ contains the segments $[u_1,u_1+u_2]$ and $[u_2,u_1+u_2]$.
Thus, $K$ is a quasi hexagon.
\end{proof}

Since the contact graph of a totally separable packing of translates of $K$ is planar, as long as $K$ is not a parallelogram \cite{Br96}, and Lemma~\ref{lem:triangle} implies that the contact graph is triangle-free if $K$ is not a quasi hexagon, we can already deduce that the number of edges in a contact graph is at most $2n-4$.
In the next section, we improve this bound to $2n-2\sqrt{n}$ with the use of a certain angular measure, which we now introduce.

\begin{definition}
For two points $p,q$ on $\bd K$, we denote the \emph{clockwise arc} of $\bd K$ connecting $p$ and $q$ by $\darc{pq}$.
If $p\neq -q$, then we denote the \emph{minor arc} connecting $p$ and $q$ by $\arc{pq}=\arc{qp}$.
\end{definition}

\begin{definition}\label{def:angularmeasure}
Let $K$ be an $o$-symmetric convex body on the plane. A Borel measure $\mu$ on 
$\bd K$ is an \emph{angular measure} if
\begin{enumerate}
\item $\mu(\bd K)=2\pi$,
\item $\mu(X)=\mu(-X)$ 
for every Borel subset $X$ of $\bd K$, and
\item $\mu$ is \emph{continuous}, that is, 
$\mu(\set{p})=0$ for every $p\in \bd K$.
\end{enumerate}
\end{definition}

We note that the angular measure of any arc connecting two opposite points of $\bd K$ (ie., a 
\emph{semicircle}) equals $\pi$.
It follows that for any angular measure, the sum of the interior angles of a triangle equals $\pi$.
This can be shown in exactly the same way as the proof for the Euclidean angular measure.
It then follows by triangulation (see \cite{Br96}) that for any simple closed polygon 
with $v$ vertices, 
\begin{equation}\label{eq:anglesumpolygon}
\text{the sum of the interior angles }  = (v-2)\pi.
\end{equation}
An $o$-symmetric convex body $K$ has many angular measures.
For example, we can take the Euclidean angular measure after choosing a Euclidean structure on the plane, or we can take the one-dimensional Hausdorff measure on $\bd K$, normalized to $2\pi$.
Most of these angular measures have no further interesting properties beyond \eqref{eq:anglesumpolygon}, and where angular measures have been used in the literature, specific ones with further properties have been constructed.
Brass \cite{Br96} constructed an angular measure in which the angles of any equilateral triangle are all $\pi/3$.
In \cite{BKO} the notion of a \emph{B-measure} is used. This is an angular 
measure $\mu$ such that $\mu(A)=\pi/2$ whenever $A$ is an arc on $\bd K$ with 
endpoints $a$ and $b$, say, such that $a$ is \emph{Birkhoff orthogonal} to $b$, that 
is, the line through $a$ in the direction $b$ supports $K$ at $a$. It turns out 
that B-measures exist only in very special cases \cite{NPS20}.
In \cite{BKO}, Theorem~\ref{thm:csep} is proved for all smooth and strictly convex bodies by approximating these bodies with bodies admitting a B-measure.
We avoid the need for approximation by using the following type of angular measure that exists for any $K$ that is not a quasi hexagon.
\begin{definition}
Let $K$ be an $o$-symmetric convex body in the plane that is not a quasi hexagon.
A \emph{$\pi$-measure} is an angular measure $\mu$ on $\bd K$ if for every segment $[a,b]\subset\bd K$ with $\norm[K]{a-b} > 1$ we have
\begin{enumerate}
\item $\mu(\arc{bc})=0$ where $c\in [a,b]$ is such that $\norm[K]{a-c}=1$, and $\mu(\arc{ad})=0$ where $d\in[a,b]$ is such that $\norm[K]{b-d}=1$,
\item for any segment $[a,e]\subset\bd K$ not parallel to $[a,b]$, we have $\mu(\arc{ae})=0$.
\end{enumerate}
\end{definition}

Any segment $[a,e]$ in the above definition will have $K$-length strictly smaller than $1$, since $K$ is not a quasi hexagon.
It is easy to see that any $K$ that is not a quasi hexagon admits a $\pi$-measure, since after removing the arcs that are required to have measure~$0$ from $\bd K$, there will always be some arcs left on which a mass of $2\pi$ can be distributed.
The following property of $\pi$-measures will be crucial for the proof of Theorem~\ref{thm:csep}.

\begin{lemma}\label{lem:piangle}
Let $K$ be an $o$-symmetric convex body that is not a quasi hexagon, and let $\mu$ be a $\pi$-measure on $\bd K$.
Let $u_0, u_1, u_2\in\bd K$ be such that $u_1$ is on the directed arc $\darc{u_2 u_0}$ and either $\{K,K+2u_0,K+2u_1,K+2u_2\}$ or $\{K,K+2u_0,K+2u_1,K+2(u_1-u_2)\}$ is a totally separable packing.
Then $\mu(\darc{u_2 u_0})\geq\pi$.
\end{lemma}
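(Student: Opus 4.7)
The plan rests on two ingredients. First, for each translate $K + 2u_i$ that touches $K$ in the packing, total separability yields a separating line $\ell_i = \{\phi_i = 1\}$, where $\phi_i$ is a supporting functional of $K$ at $u_i$ with $\phi_i(u_i) = 1 = \max_K \phi_i$. The non-crossing of $\ell_i$ with the interior of any other translate $K + 2w$ then produces the dichotomy $\phi_i(w) \leq 0$ or $\phi_i(w) = 1$; the latter places $w$ on the supporting face $S_i := \bd K \cap \{\phi_i = 1\}$ of $K$ at $u_i$. In Case~2, the separating line between $K + 2u_1$ and $K + 2(u_1 - u_2)$ sits at $\{\phi_2 = 2\phi_2(u_1) - 1\}$, and its non-crossing against $K$ yields the same dichotomy for $\phi_2$ at $u_1$. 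Second, I rely on the fact that by the antipodal symmetry $\mu(-X) = \mu(X)$, for any linear functional $\phi$ with $\max_K \phi = 1$, the open arc $\{x \in \bd K : \phi(x) > 0\}$, bounded by the antipodal pair $\{\phi = 0\} \cap \bd K$, has $\mu$-measure exactly $\pi$.

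The main step handles the generic sub-case. Suppose $\phi_1(u_0) \leq 0$ and $\phi_1(u_2) \leq 0$. Then $u_0, u_2 \in \{\phi_1 \leq 0\} \cap \bd K$, while $u_1 \in \{\phi_1 = 1\}$ lies in the open upper arc $U := \{\phi_1 > 0\} \cap \bd K$ of measure $\pi$. The clockwise arc $\darc{u_2 u_0}$ contains $u_1$ in its interior and has both endpoints in the closed lower arc, so it must contain all of $U$, giving $\mu(\darc{u_2 u_0}) \geq \pi$.

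For the remaining sub-cases I invoke Lemma~\ref{lem:triangle}: since $K$ is not a quasi hexagon, no three of the translates pairwise touch, so $\norm[K]{u_i - u_j} > 1$ strictly for every pair $i \neq j$. If both $u_0, u_2$ lie on $S_1$, all three of $u_0, u_1, u_2$ lie on the segment $S_1 \subseteq \bd K$, and the pairwise strict lower bounds combined with the universal $\norm[K]{x - y} \leq 2$ on $\bd K$ (with equality iff $y = -x$) give $u_0 = -u_2$, so $\darc{u_2 u_0}$ is a semicircle of $\mu$-measure $\pi$. In the remaining asymmetric sub-case, exactly one of them, say $u_0$, lies on $S_1$. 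I then rule out $\phi_2(u_0) = 1$ and $\phi_2(u_1) = 1$ using the non-quasi-hexagonal hypothesis: either option would place $u_0$ or $u_1$ as a common corner of two distinct supporting segments of $\bd K$ each of $K$-length at least~$1$, a configuration that makes $K$ a quasi hexagon by Lemma~\ref{lem:quasi}. Hence $\phi_2(u_0), \phi_2(u_1) \leq 0$, which lets me rerun the main step with $\ell_2$ in place of $\ell_1$, combined with the $\pi$-measure nullity of sub-arcs of the segment $[u_1, u_0] \subseteq S_1$ (which has $K$-length strictly greater than $1$) near its endpoints; the analogous sub-case with $u_2 \in S_1$, and Case~2, are handled by parallel arguments.

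The hard part is this asymmetric sub-case: neither the upper-arc argument for $\ell_1$ nor for $\ell_2$ alone suffices, and the technical heart of the proof is combining the two by exploiting the $\pi$-measure nullity on the supporting segment $S_1$.
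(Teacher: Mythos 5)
Your overall framework is sound and, in its first reductions, mirrors the paper: the separating line $\ell_i=\set{\phi_i=1}$ for each translate touching $K$, the dichotomy $\phi_i(w)\leq 0$ or $w\in S_i$, the semicircle argument when $\phi_1(u_0),\phi_1(u_2)\leq 0$ (which is correct, since $\ker\phi_1$ meets $\bd K$ in an antipodal \emph{pair of points}, the line passing through the interior point $o$), the vacuity of the case $u_0,u_2\in S_1$, and the exclusion of $\phi_2(u_0)=1$ and $\phi_2(u_1)=1$ via Lemma~\ref{lem:triangle} and the quasi-hexagon definition. The gap is in the asymmetric sub-case, which you yourself identify as the technical heart but do not actually prove. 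Using only the symmetry $\mu(X)=\mu(-X)$ and continuity, one finds that whenever $\darc{u_2u_0}$ contains no semicircle (so that the clockwise order is $u_2,u_1,u_0,-u_2,-u_1,-u_0$), one has the identity $\mu(\darc{u_2u_0})=\pi-\mu(\darc{u_0(-u_2)})$; hence the lemma is \emph{equivalent} to showing that the arc of $\bd K$ from $u_0$ clockwise to $-u_2$ is $\mu$-null. The sign conditions $\phi_2(u_0)\leq 0$, $\phi_2(u_1)\leq 0$ (and likewise for $\phi_0$) are constraints on points, not on the measure, and impose nothing on $\mu$ over that arc; and the ``nullity of sub-arcs of $[u_1,u_0]$ near its endpoints'' (condition~1 of the definition of a $\pi$-measure) only controls sub-segments \emph{inside} $S_1$, not the part of $\bd K$ beyond $u_0$. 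If the arc past $u_0$ were strictly convex, a $\pi$-measure could assign it positive mass and your estimate would fail. What is actually needed is that $[u_0,-u_2]$ is a \emph{segment} of $\bd K$ adjacent to the long segment $S_1$, so that condition~2 of the definition applies; equivalently, that $\phi_0(u_2)=-1$ rather than merely $\phi_0(u_2)\leq 0$.

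That missing step is a packing argument, not a measure argument, and it is the core of the paper's proof: if the separating lines $\ell_0$ and $\ell_2$ are not parallel, then $K$ itself cannot fit in the triangle bounded by $\ell_0,\ell_1,\ell_2$, a contradiction; parallelism then yields the boundary segment $[-u_0,u_2]\subset\bd K$ (the mirror of your $[u_0,-u_2]$). The same issue recurs in the second packing $\{K,K+2u_0,K+2u_1,K+2(u_1-u_2)\}$: in the paper's Cases~2 and~3 one must show $u_2\in\ell_1-2u_1$ (resp.\ $u_0\in\ell_1-2u_1$) via the observation that no translate of $K$ fits between $P+2u_0$ and $P+2u_1$; your ``parallel arguments'' for these cases inherit the same gap. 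Until you supply an argument forcing the arc $\darc{u_0(-u_2)}$ to lie on a supporting segment, the proof is incomplete.
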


\begin{proof}
We first consider the totally separable packing $\{K,K+2u_0,K+2u_1,K+2u_2\}$.
Let $\ell_i$ be a line separating $K$ and $K+2u_i$ that does not intersect the interiors of the other two translates ($i=0,1,2$).
If $K+2u_0$ and $K+2u_2$ are both on the side of $\ell_1$ opposite $K+2u_1$, then $\darc{u_2 u_0}$ contains a semicircle, hence $\mu(\darc{u_2 u_0})\geq\pi$.
Thus we assume without loss of generality that $\mu(\darc{u_2 u_0}) < \pi$ and that $K+2u_1$ and $K+2u_2$ are on the same side of $\ell_1$ (Figure~\ref{fig:firstcase}).

\begin{figure}[ht]\centering
  \begin{tikzpicture}[scale=0.27]
  \draw (-5,-6) -- (-5,20) node[below right] {$\ell_0$};
  \draw (5,-6) -- (5,17) node[below right] {$\ell_2$};
  \draw (-9.5,5) -- (14,5) node[right] {$\ell_1$};
  
  \def\drawK{\draw[draw=blue, fill=blue!10!white]
	(-1,-5) -- (5,-5) -- (5,-3) .. controls (4,2) and (4,3) ..
	(1,5) -- (-5,5) -- (-5,3) .. controls (-4,-2) and (-4,-3) .. cycle;}

  \begin{scope}[thick, blue, scale=0.99]
  \drawK
  \draw[blue!50!black] (0,0) node {$K+2u_1$};
  \end{scope}

  \begin{scope}[thick, xshift=10cm, blue, scale=0.99]
  \drawK
  \draw[blue!50!black] (0,0) node {$K+2u_2$};
  \end{scope}

  \begin{scope}[xshift=-10cm, yshift=16cm, thick, blue, scale=0.99]
  \drawK
  \draw[blue!50!black] (0,0) node {$K+2u_0$};
  \end{scope}

  \begin{scope}[xshift=0cm, yshift=10cm, thick, blue, scale=0.99]
  \drawK
  \draw[blue!50!black] (0,2) node {$K$};
  \fill[blue!50!black] (0,0) circle (5pt);
  \draw[blue!50!black] (-0.7,-0.5) node{$o$};
  \draw[-latex] (0,0) -- (-5,3);
  \draw[blue!50!black] (-4,3.7) node{$u_0$};
  \draw[-latex] (0,0) -- (0,-5);
  \draw[blue!50!black] (1,-4) node{$u_1$};
  \draw[-latex] (0,0) -- (5,-5);
  \draw[blue!50!black] (6,-4.2) node{$u_2$};
  \draw[-latex, blue!50!white] (0,0) -- (5,-3);
  \draw[blue!50!white] (6.6,-2.7) node{$-u_0$};
  \end{scope}
  \end{tikzpicture}
\caption{Lemma~\ref{lem:piangle}, when $\{K,K+2u_0,K+2u_1,K+2u_2\}$ is a totally separable packing.}\label{fig:firstcase}
\end{figure}
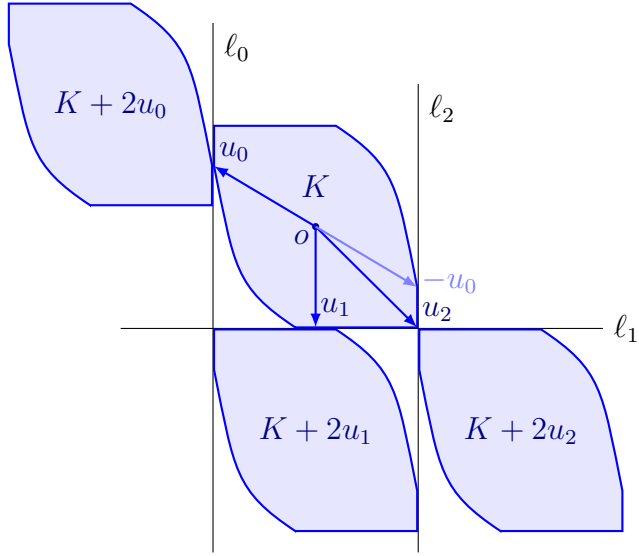

\begin{figure}[ht]\centering
  \begin{tikzpicture}[scale=0.27]
  \draw (-17,5) -- (17,5) node[right] {$\ell_1$};
  \draw (-17,15) -- (17,15) node[right] {$\ell_1-2u_1$};

  \def\drawK{\draw[draw=blue, fill=blue!10!white]
	(-5,-5) -- (5,-5) -- (5,5) -- (-5,5) -- (-5,-5);}

  \begin{scope}[thick, xshift=-10cm, blue, scale=0.99]
  \drawK
  \draw[blue!50!black] (0,0) node {$P+2u_0$};
  \end{scope}

  \begin{scope}[thick, xshift=10cm, blue, scale=0.99]
  \drawK
  \draw[blue!50!black] (0,0) node {$P+2u_1$};
  \end{scope}

  \begin{scope}[xshift=0cm, yshift=10cm, thick, blue, scale=0.99]
  \drawK
  \draw[blue!50!black] (0,2) node {$P$};
  \fill[blue!50!black] (0,0) circle (5pt);
  \draw[blue!50!black] (-0.7,-0.5) node{$o$};
  \draw[blue!50!black] (6,-4.2) node{$u_1$};
  \draw[blue!50!black] (-6,-4.2) node{$u_0$};
  \draw[blue!50!black] (-6,6) node{$-u_1$};
  \draw[blue!50!black] (6,6) node{$-u_0$};
  \end{scope}
  \end{tikzpicture}
\caption{Lemma~\ref{lem:piangle}, Case~2}\label{fig:casetwo}
\end{figure}

\begin{figure}[ht]\centering
  \begin{tikzpicture}[scale=0.27]
  \draw (-28,5) -- (10,5) node[right] {$\ell_1$};
  \draw (-28,15) -- (10,15) node[right] {$\ell_1-2u_1$};

  \def\drawK{\draw[draw=blue, fill=blue!10!white]
	(-5,-5) -- (5,-5) -- (5,5) -- (-5,5) -- (-5,-5);}

  \begin{scope}[thick, xshift=-20cm, yshift=10cm, blue, scale=0.99]
  \drawK
  \draw[blue!50!black] (0,0) node {$P+2(u_1-u_2)$};
  \end{scope}

  \begin{scope}[xshift=0cm, yshift=10cm, thick, blue, scale=0.99]
  \drawK
  \draw[blue!50!black] (0,2) node {$P$};
  \fill[blue!50!black] (0,0) circle (5pt);
  \draw[blue!50!black] (-0.7,-0.5) node{$o$};
  \draw[blue!50!black] (6,-4.2) node{$u_2$};
  \draw[blue!50!black] (-6,-4.2) node{$u_1$};
  \draw[blue!50!black] (-6,6) node{$-u_2$};
  \draw[blue!50!black] (6,6) node{$-u_1$};
  \end{scope}
  \end{tikzpicture}
\caption{Lemma~\ref{lem:piangle}, Case~3}\label{fig:casethree}
\end{figure}

Since $K$ touches $K+2u_1$ and $K+2u_2$ in $u_1$ and $u_2$, respectively, $\bd K$ contains the segment $[u_1,u_2]$.
Since $\mu(\darc{u_2 u_0}) < \pi$, we have that $\darc{u_2 u_0}$ is a minor arc, and the separating lines $\ell_0$ and $\ell_2$ are either parallel or intersect on the same side as $u_1$ of the line $ou_0$.
If $\ell_0$ and $\ell_2$ intersect, then no translate of $K$ can lie in the triangle bounded by $\ell_0, \ell_1, \ell_2$.
It follows that $\ell_0$ and $\ell_2$ are parallel and hence $[-u_0,u_2]\subset \bd K$.
Therefore, $\mu(\darc{-u_0,u_2})=0$ by definition of $\pi$-measure, and we conclude that $\mu(\darc{u_2 u_0})=\pi$.

Next consider the totally separable packing $\{K,K+2u_0,K+2u_1,K+2(u_1-u_2)\}$.
Let $\ell_1$ be a line separating $K$ and $K+2u_1$ that does not intersect the interiors of the other two translates. We will say that a translate of $K$ is \emph{above} (resp., \emph{below}) $\ell_1$ if it lies on the same (resp., opposite) side of $\ell_1$ as $K$. We consider three cases.

\emph{Case~1:} Assume that $K+2u_0$ is above and $K+2(u_1-u_2)$ is below $\ell_1$. Then both $u_0$ and $u_2$ lie on the side of $\ell_1-u_1$ opposite to $u_1$, hence $\darc{u_2u_0}$ contains a semicircle, and the claim follows.

\emph{Case~2:} Assume that $K+2u_0$ is below $\ell_1$. 
Then $[u_0,u_1]\subset\bd K\cap\ell_1$ and, by Lemma~\ref{lem:triangle}, $\norm[K]{u_1-u_0}>1$.
Let $P$ denote the parallelogram $\conv\{\pm u_0,\pm u_1\}$ 
inscribed to $K$ with $P\neq K$. Moreover, $\ell_1$ and $\ell_1-2u_1$ are support lines of $K$. Since no translate of $K$ `fits between' $P+2u_0$ and $P+2u_1$, it follows that $u_2\in\ell_1-2u_1$ (Figure~\ref{fig:casetwo}).

Assuming that $\darc{u_2u_0}$ does not contain a semicircle, the points $-u_1,-u_0,u_2$ follow each other on $\bd K\cap(\ell_1-2u_1)$ in this clockwise order. Since $\norm[K]{u_1-u_0}>1$, we have that $\mu(\darc{-u_0,u_2})=0$ and hence, $\mu(\darc{u_2u_0})=\mu(\darc{-u_0,u_0})=\pi$ as required.

\emph{Case~3:} Assume that both $K+2u_0$ and $K+2(u_1-u_2)$ are above $\ell_1$.
We can use a very similar argument to the previous case.
We have that $[u_2,u_1]\subset\bd K\cap\ell_1$ and, by Lemma~\ref{lem:triangle}, $\norm[K]{u_2-u_1}>1$. Let $P$ denote the parallelogram $P=\conv([u_2,u_1]\cup[-u_2,-u_1])$. Clearly, $P\subset K$ with $P\neq K$. Moreover, $\ell_1$ and $\ell_1-2u_1$ are support lines of $K$. Since no translate of $K$ `fits between' $P$ and $P+2(u_1-u_2)$, it follows that $u_0\in\ell_1-2u_1$, see Figure~\ref{fig:casethree}.

Assuming that $\darc{u_2u_0}$ does not contain a semicircle, the points $u_0,-u_2,-u_1$ follow each other on $\bd K\cap(\ell_1-2u_1)$ in this clockwise order. Since $\norm[K]{u_2-u_1}>1$, we have that $\mu(\darc{u_0,-u_2})=0$ and hence, $\mu(\darc{u_2u_0})=\mu(\darc{u_2,-u_2})=\pi$ as required, completing the proof of Lemma~\ref{lem:piangle}.

\end{proof}

Lemma~\ref{lem:piangle} can now be applied to find a simple derivation of the totally separable Hadwiger numbers of bodies that are not quasi hexagons.

\begin{lemma}\label{lem:hsep}
 Let $K$ be an $o$-symmetric convex body in the plane.
 \begin{enumerate}[(i)]
  \item\label{item:paral} If $K$ is a parallelogram, then $\Hsep(K)=8$.
  \item\label{item:hex} If $K$ is a quasi hexagon but not a parallelogram, then 
$\Hsep(K)=6$.
  \item\label{item:nothex} If $K$ is not a quasi hexagon, then $\Hsep(K)=4$.
 \end{enumerate}
\end{lemma}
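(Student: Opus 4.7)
The plan is to dispatch (i) and (ii) quickly from the classical planar Hadwiger bounds plus explicit lattice constructions, while part (iii) is the substance and is handled by Lemma~\ref{lem:piangle}.

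For (iii), suppose for contradiction that $K$ is not a quasi hexagon and that $K$ is touched by $n\ge 5$ translates $K+2u_1,\dots,K+2u_n$ in some totally separable packing. The $u_i\in\bd K$ are distinct (the translates are) and I list them in clockwise cyclic order on $\bd K$. Fix a $\pi$-measure $\mu$, which exists since $K$ is not a quasi hexagon. For each index $i$ (mod $n$) the four-body subpacking $\{K,K+2u_{i-1},K+2u_i,K+2u_{i+1}\}$ is totally separable and $u_i\in\darc{u_{i-1}u_{i+1}}$, so Lemma~\ref{lem:piangle} (first alternative, applied with $u_0\mapsto u_{i+1}$, $u_1\mapsto u_i$, $u_2\mapsto u_{i-1}$) yields $\mu(\darc{u_{i-1}u_{i+1}})\ge\pi$. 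Decomposing $\darc{u_{i-1}u_{i+1}}=\darc{u_{i-1}u_i}\cup\darc{u_iu_{i+1}}$ and summing cyclically, each elementary arc $\darc{u_ju_{j+1}}$ appears in exactly two summands, so $n\pi\le 2\mu(\bd K)=4\pi$, giving $n\le 4$. Combined with the general lower bound $\Hsep(K)\ge 2d=4$, this yields $\Hsep(K)=4$.

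For (i) and (ii), the upper bounds use $\Hsep(K)\le H(K)$ together with the classical planar fact that $H(K)=8$ or $6$ according as $K$ is or is not a parallelogram. For the lower bounds, I affinely reduce to $K=[-1,1]^2$ in (i) and, via Lemma~\ref{lem:quasi}, to $H\subseteq K\subseteq S=[-1,1]^2$ with $K\ne S$ in (ii); in both cases the packing is the $2\bZ^2$-lattice packing. The axis-parallel lines $x=\pm 1$ and $y=\pm 1$ separate every touching pair without entering any other translate's interior, since each translate lies in the intersection of two strips of width $2$ bounded by such lines. In (i), the central square has $8$ neighbors. In (ii), exactly $6$ of the would-be neighbors touch $K$: $K\pm(2,0)$, $K\pm(0,2)$, $K\pm(2,2)$ touch via the corresponding vertex of $H$, whereas $K\pm(2,-2)$ would require $K$ to contain $(1,-1)$, which together with $o$-symmetry and $H\subseteq K$ would force $K\supseteq\conv(H\cup\{\pm(1,-1)\})=S$, contradicting $K\ne S$.

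The principal difficulty is the cyclic summation in (iii): the inequality is tight with $4\pi=4\pi$ at $n=4$, so only $n\ge 5$ produces a contradiction, and the double-counting has to be set up cleanly. Distinctness of the $u_i$ (forced by distinctness of the translates) and the unambiguity of $\darc{\cdot\cdot}$ on distinct pairs handle all potential degeneracies, since an antipodal pair simply contributes exactly $\pi$ to $\mu$. Everything in (i) and (ii) is mechanical once one observes the strip containment $K\subseteq[-1,1]^2$.
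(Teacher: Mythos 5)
Your proposal is correct and follows essentially the same route as the paper: upper bounds in (i) and (ii) from the classical planar Hadwiger numbers with lattice packings for the lower bounds, and for (iii) the cyclic application of Lemma~\ref{lem:piangle} to consecutive triples followed by the double-counting of elementary arcs against $2\mu(\bd K)=4\pi$. The only (immaterial) differences are that you run the count for general $n\ge 5$ where the paper fixes $n=5$, and you cite the generic bound $\Hsep(K)\ge 2d$ where the paper exhibits the circumscribed-parallelogram packing explicitly.
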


\begin{proof}
It is well known \cite{Gru61} that the Hadwiger number of a parallelogram is 8, 
and that of any 
other planar convex body is 6, thus giving the upper bounds in 
(\ref{item:paral}) and (\ref{item:hex}). The lower bounds are shown by 
Figure~\ref{fig:totsep}.
\begin{figure}
\captionsetup[subfigure]{font=footnotesize}
\centering
\subcaptionbox{parallelogram}[.3\textwidth]{%
\begin{tikzpicture}[scale=0.1]
  \draw (-15,-19) -- +(0,38);
  \draw (-5,-19) -- +(0,38);
  \draw (5,-19) -- +(0,38);
  \draw (15,-19) -- +(0,38);
  \draw (-19,-15) -- +(38,0);
  \draw (-19,-5) -- +(38,0);
  \draw (-19,5) -- +(38,0);
  \draw (-19,15) -- +(38,0);
  \def\drawK{\draw[draw=blue, fill=blue!10!white]
	(-5,-5) -- (5,-5) -- (5,5) -- (-5,5) -- cycle;}
  \foreach \x in {-10cm,0cm,10cm} {
      \foreach \y in {-10cm,0cm,10cm} {
	  \begin{scope}[xshift = \x, yshift = \y, thick, scale=0.99]
	      \drawK
	  \end{scope}
      }
  }
  \begin{scope}[thick, scale=0.99]
  \draw[draw=red, fill=red!20!white]
	(-5,-5) -- (5,-5) -- (5,5) -- (-5,5) -- cycle;
  \end{scope}
\end{tikzpicture}}%
\subcaptionbox{quasi hexagon}[.4\textwidth]{
  \begin{tikzpicture}[xslant=-0.57,yscale=0.9,scale=0.11]
\draw (-15,-19) -- +(0,38);
\draw (-5,-19) -- +(0,38);
\draw (5,-19) -- +(0,38);
\draw (15,-19) -- +(0,38);
\draw (-19,-15) -- +(38,0);
\draw (-19,-5) -- +(38,0);
\draw (-19,5) -- +(38,0);
\draw (-19,15) -- +(38,0);
\def\drawK{\draw[draw=blue, fill=blue!10!white]
      (-5,-5) -- (0,-5) .. controls (1,-5) and (5,-1) .. (5,0) -- (5,5) -- 
(0,5) .. controls (-1,5) and (-5,1) .. (-5,0) -- cycle;}
\foreach \x in {-10cm,0cm,10cm} {
    \foreach \y in {-10cm,0cm,10cm} {
        \begin{scope}[xshift = \x, yshift = \y, thick, blue, scale=0.98]
            \drawK
        \end{scope}
    }
}
\begin{scope}[thick, scale=0.98]
\draw[draw=red, fill=red!20!white]
      (-5,-5) -- (0,-5) .. controls (1,-5) and (5,-1) .. (5,0) -- (5,5) -- 
(0,5) .. controls (-1,5) and (-5,1) .. (-5,0) -- cycle;
\end{scope}

\end{tikzpicture}}%
\subcaptionbox{non-quasi hexagon}[.3\textwidth]{%
\begin{tikzpicture}[scale=0.1]
  \draw (-15,-19) -- +(0,38);
  \draw (-5,-19) -- +(0,38);
  \draw (5,-19) -- +(0,38);
  \draw (15,-19) -- +(0,38);
  \draw (-19,-15) -- +(38,0);
  \draw (-19,-5) -- +(38,0);
  \draw (-19,5) -- +(38,0);
  \draw (-19,15) -- +(38,0);
  \def\drawK{\draw[draw=blue, fill=blue!10!white]
	(0,0) circle [radius=5cm];}
  \foreach \x in {-10cm,0cm,10cm} {
      \foreach \y in {-10cm,0cm,10cm} {
	  \begin{scope}[xshift = \x, yshift = \y, thick, scale=0.99]
	      \drawK
	  \end{scope}
      }
  }
  \begin{scope}[thick, scale=0.99]
  \draw[draw=red, fill=red!20!white]
	(0,0) circle [radius=5cm];
  \end{scope}
\end{tikzpicture}}%
\caption{Totally separable packings}\label{fig:totsep}
\end{figure}
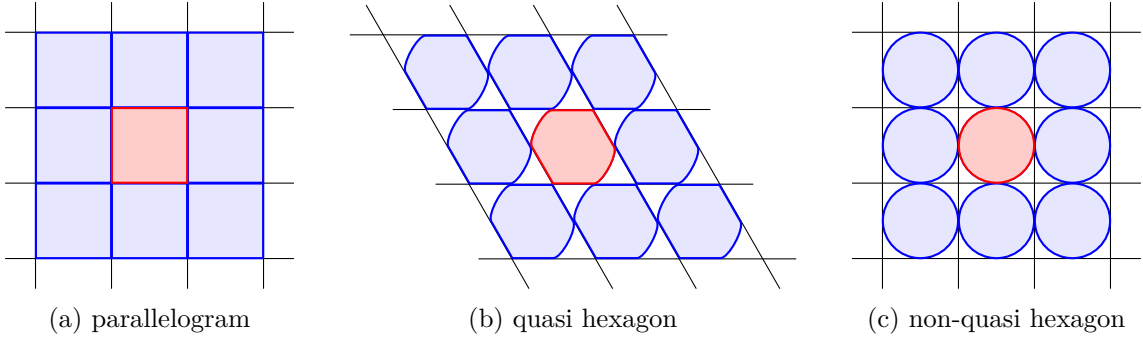

We prove the third statement.
First note that any $o$-symmetric convex body $K$ has a circumscribed parallelogram such that the midpoints of the sides of the parallelogram are on the boundary of $K$.
If we let the midpoints be $p_1,p_2,p_3,p_4$, then $\{K\}\cup\setbuilder{K+2p_i}{i=1,\dots,4}$ is a totally separable packing, 
hence $\Hsep(K)\geq 4$.

Next we assume that $K$ is not a quasi hexagon, and show that $\Hsep(K)\leq 4$.
Suppose for a contradiction that $q_0,q_1, \ldots,q_4\in\bd K$ are unit vectors such that 
$\{K, 2q_0+K,\ldots,2q_4+K\}$ is a totally separable packing.
We may assume that $q_0,q_1,\dots,q_4$ are in clockwise order on $\bd K$.
Then by Lemma~\ref{lem:piangle}, for any $\pi$-measure $\mu$ on $\bd K$ we have $\mu(\arc{q_i q_{i+2}})\geq\pi$ for each $i=0,\dots,4$, where subscripts are taken modulo~$5$.
If we add all these angles, we obtain that $2\mu(\bd K)=5\pi$, a contradiction.
This completes the proof of Lemma~\ref{lem:hsep}.
\end{proof}

Although we will not use it, it can be shown in a similar way that if $K$ is not a quasi hexagon, any two 
vertices in the contact graph of a totally separable packing of translates of $K$ share at most two neighbors.

\section{Proof of Theorem~\ref{thm:csep}}\label{sec:csep}

We follow the 
line of the argument of the proof of \cite[Theorem~13]{BKO}, where we use a $\pi$-measure on the boundary of a non-quasi hexagon.
We also fix a gap in the proof that also occurs in the proof of the Euclidean case in \cite{BSS16} which goes back to \cite{H74}.

We show the upper bound by induction on $n$. Let $G$ be the contact graph of a totally 
separable packing of $n$ translates of $K$ with the maximum number $e=\csep(K,n)$ of 
edges.
The base cases $n\leq4$ follow immediately from the fact that $G$ is 
triangle-free (Lemma~\ref{lem:triangle}).

We may assume that $G$ is $2$-connected.
Indeed, suppose that by the removal of a 
vertex $x$, $G$ becomes disconnected.
Then $G$ can be decomposed into two subgraphs $G_1$ and $G_2$ with only the vertex $x$ in common and with no common edges.
Denote the number of vertices of $G_i$ by $n_i\geq 2$, $i=1,2$, where we assume $n_1\leq n_2$ without loss of generality.
Then $n_1+n_2=n+1$, and by the induction hypothesis, the number of edges of $G$ is at most $\lfloor 2n_1-2\sqrt{n_1}\rfloor+\lfloor 2n_2-2\sqrt{n_2}\rfloor$.
It is easy to show that this is at most $\lfloor 2n-2\sqrt{n}\rfloor$.
Indeed, if $4\leq n_1\leq n_2$, then by the convexity of the function $t\mapsto 2(t-\sqrt{t})$, we have that $2n_1-2\sqrt{n_1}+2n_2-2\sqrt{n_2}\leq 2\cdot 4-2\sqrt{4} + 2(n-3)-2\sqrt{n-3}$, and this can be checked to be at most $2n-2\sqrt{n}$ for all $n\geq 4$.
If $n_1=3\leq n_2$, then $\lfloor 2n_1-2\sqrt{n_1}\rfloor+\lfloor 2n_2-2\sqrt{n_2}\rfloor \leq 2 + 2(n-2)-2\sqrt{n-2}\leq 2n-\sqrt{2n}$.
The case $n_1=2$ is similar.

We can avoid these calculations by using the fact that in a polyomino with $n$ cells, the number of pairs touching in an edge is at most $\lfloor 2n-\sqrt{2n}\rfloor$ (see \cite{BKO} and references therein).
Consider any polyominoes $P_1$ and $P_2$ with exactly $\lfloor 2n_i-2\sqrt{n_i}\rfloor$ touching cells, respectively.
It is then sufficient to translate $P_2$ so that one of its cells overlaps with a cell of $P_1$ without there being any other overlaps.
This can be done by moving the left-most cell in the top row of $P_2$ so that it overlaps with the right-most cell in the bottom row of $P_1$.
This gives a polyomino with $n$ cells and at least $\lfloor 2n_1-2\sqrt{n_1}\rfloor+\lfloor n_2-2\sqrt{n_2}\rfloor$ cells, which is bounded above by $\lfloor 2n-\sqrt{2n}\rfloor$.

Thus, $G$ is a $2$-connected planar graph on $n$ vertices, each of 
degree $2$, $3$ or $4$ by Lemma~\ref{lem:hsep}.
Consider the closed polygon $P$ with vertices $a_1,\ldots,a_v$ 
bounding the outer face of $G$ in counter-clockwise orientation.
We will always refer to the indices modulo 
$v$.
Note that $\norm[K]{a_i-a_{i+1}}=2$ for each $i$.

We fix a $\pi$-measure $\mu$ on $\bd K$.
Let $v_2, v_3$ and $v_4$ denote the number of vertices of degree $2$, $3$ and $4$ of 
$P$, where $v_2+v_3+v_4=v$.
Each vertex of $P$ is also on some bounded faces of $G$, and for each such bounded face there is an angle at that vertex.
In total, there are $v_2+2v_3+3v_4$ such angles inside $P$ with vertex on $P$.
We call two such angles \emph{adjacent} if either they have the same vertex and share an edge, or their vertices are adjacent on $P$ and they share the same bounded face.
By Lemma~\ref{lem:piangle}, the sum of the $\mu$-measure of two adjacent angles is $\geq\pi$.
Thus if we sum up the measures of all adjacent pairs of angles, we obtain a sum of at least $\pi(v_2+2v_3+3v_4)$.
Since each angle blongs to two such pairs, we sum each angle twice.
By \eqref{eq:anglesumpolygon}, we obtain $2\pi(v-2)\geq\pi(v_2+2v_3+3v_4)$.
From this we obtain
\begin{equation}\label{eq:anglesum}
 v_2+2v_3+3v_4\leq 2v-4.
\end{equation}

For any positive integer $j$, denote by $g_j$ the number of internal faces of 
$G$ with $j$ edges. Since $G$ is triangle-free, $g_1=g_2=g_3=0$. Since $G$ has 
$e=\csep(K,n)$ edges, Euler's formula gives $n-e+g_4+g_5+\ldots=1$. If we 
add the number of edges bounding the internal faces, then internal edges are 
counted twice, edges of $P$ are counted once, so we have 
$4(1-n+e)=4(g_4+g_5+\ldots)\leq 4g_4+5g_5+\ldots=v+2(e-v)$. 
This yields 
\begin{equation}\label{eq:smallbound1}
 e\leq 2n-2-v/2.
\end{equation}
If $v\geq 4\sqrt{n}-4$, then we obtain $e\leq 2n-2\sqrt{n}$, and we are done.
Thus we assume without loss of generality that $v < 4\sqrt{n}-4$.
We delete from $G$ the $v$ vertices of $P$ and all edges incident to them. If there is no edge in $G$ that joins two non-adjacent vertices of $P$ (a \emph{diagonal}), we remove exactly $v_2+2v_3+3v_4$ edges from $P$, and we 
obtain
\[e-(v_2-2v_3+3v_4)\leq 2(n-v)-2\sqrt{n-v}\]by the induction hypothesis.
Then \eqref{eq:anglesum} yields
\begin{equation*}
 e\leq 2n-4-2\sqrt{n-v} < 2n-4-2\sqrt{n-4\sqrt{n}+4} = 2n-4-2(\sqrt{n}-2) = 2n-2\sqrt{n},
\end{equation*}
noting that $n\geq 4$.
To finish the proof of Theorem~\ref{thm:csep}, it remains to consider the case where $P$ has a diagonal.
In this case there exist two contact graphs $G_1$ and $G_2$ with union $G$, intersecting in just the diagonal and its two endpoints, such that if $G_i$ has $n_i\geq 4$ vertices and $e_i$ edges ($i=1,2$), then $n_1+n_2=n+2$ and $e_1+e_2=e+1$.
By induction we obtain
\begin{align*}
e &=e_1+e_2-1\leq 2n_1-2\sqrt{n_1} + 2n_2-2\sqrt{n_2} -1 = 2n+3-2(\sqrt{n_1} + \sqrt{n_2})\\
&\leq 2n+3-2(\sqrt{4}+\sqrt{n-2}) \quad\text{by convexity}\\
&< 2n-2\sqrt{n}\quad\text{since $n\geq 6$}.
\end{align*}

\section{Final Remarks}\label{sec:final}

The following problems remain open.

\begin{question}
In the proof of Proposition~\ref{prop:ellonepacking}, we only use $2^{\sqrt{d}}$ 
separating hyperplanes around each point. Can we modify the $\ell_1^d$-unit ball 
so that it becomes smooth or strictly convex? 
\end{question}
 
\begin{question}
Can we improve the $2^{d+1}-3$ upper bound for the totally separable Hadwiger 
number of smooth or strictly convex bodies from \cite{BN18}?
\end{question}

\begin{question}
What is the smallest dimension $d$ such that there exists a smooth, strictly convex $d$-dimensional $o$-symmetric convex body $K$ with $H(K) > 2d$?
We have shown that $d\in\{5,6,7,8\}$.
Equivalently, what is the smallest $d$ such that there exists a $(2d+1)\times (2d+1)$ matrix with $1$s on the diagonal, all off-diagonal entries in the interval $(-1,0]$ and rank at most $d$?
\end{question}

\providecommand{\bysame}{\leavevmode\hbox to3em{\hrulefill}\thinspace}
\providecommand{\MR}{\relax\ifhmode\unskip\space\fi MR }
\providecommand{\MRhref}[2]{%
  \href{http://www.ams.org/mathscinet-getitem?mr=#1}{#2}
}
\providecommand{\href}[2]{#2}

\end{document}